\newtheorem{thm}{Theorem}[section]
\newtheorem{lem}[thm]{Lemma}
\newtheorem*{remark}{Remark}
\newcommand\sH{{\mathcal H}}
\newcommand\bZ{{\mathbb Z}}
\newcommand\bN{{\mathbb N}}
\def\Ric{\mathop{\rm Ric}\nolimits}
\def\tr{\mathop{\rm tr}\nolimits}
\def\cD{{\mathcal D}}
\def\bZ{{\mathbb Z}}
\def\bN{{\mathbb N}}
\begin{document}

\title{Deformations from a given K\"ahler metric to a twisted cscK metric}
\date{\today}
\author{Yu Zeng}

\maketitle
\section{Introduction}
In 1950's, E. Calabi(cf. \cite{Ca1},\cite{Ca2}) has raised the famous Calabi's conjecture stating that there exists a unique K\"ahler metric in any given K\"ahler class whose Ricci form is any given 2-form representing the first chern class. This conjecture was later proved in 1970's by the celebrated works of S. T. Yau(\cite{Y}), E. Calabi(\cite{Ca3}) and T. Aubin(\cite{A}) using continuity method to solve the complex Monge-Amp\`ere equation. In particular, when the first chern class is zero or negative, their works imply the existence of K\"ahler-Einstein metrics. When the first chern class is positive, Tian has
made contributions towards understanding precisely when a solution exists(\cite{T1}).

In 1980's, E. Calabi(cf. \cite{Ca4}\cite{Ca5}) proposed a broader program aiming to find the extremal metrics as the generalization of K\"ahler-Einstein metrics in an arbitrary K\"ahler class. As a special case of extremal metrics, the existence problem of the constant scalar curvature K\"ahler(cscK) metrics fits into a general picture of symplectic geometry as described by S. K. Donaldson(\cite{D2}). It was well known by now that  the existence of K\"ahler-Einstein metrics or cscK metrics was equivalent to some notion of "stability" in algebraic geometry (c.f. Yau-Tian-Donaldson conjecture(\cite{RT}, \cite{T1} and \cite{D1})). Recently, this conjecture was settled in the Fano case by the crucial contributions of Chen-Donaldson-Sun (cf. \cite{CDS1}, \cite{CDS2} and \cite{CDS3}). 

In a series of remarkable work (\cite{D3},\cite{D4}, \cite{D5} and \cite{D6}), S. K. Donaldson proved the existence of cscK metric on a K-stable toric surface. Very little was known for a general K\"ahler class in higher dimensions. Recently, X. Chen initiates a new program attacking the existence problem of cscK metrics via a new  continuity path in \cite{C1}, which connects
 the usual cscK metric equation with a second order elliptic equation. As in \cite{C1}, for a positive closed (1,1)-form $\chi$, we define the K\"ahler metric $\omega_{\varphi}$ satisfying 
\begin{align} \label{path}
t(R_{\varphi} - \underline R) - (1-t) (\tr_{\varphi} \chi - \underline \chi) = 0
\end{align}
the twisted cscK metric, where $R_{\varphi}$ denote the scalar curvature of $\omega_{\varphi}$, $\underline R = \frac{[c_1(M)][\omega]^{[n-1]}}{[\omega]^{[n]}}$ and $\underline \chi = \frac{[\chi][\omega]^{[n-1]}}{[\omega]^{[n]}}$. In his same paper, X. Chen also showed the openness of this path when $0< t <1$. And in a subsequent paper \cite{CPZ}, X.Chen, M. P\u aun and Y. Zeng used the openness result at $t=1$ to give a new proof of the uniqueness theorem of extremal metrics. Similar notions of twisted cscK metrics could also be found in earlier papers of J. Fine \cite{JF}, J. Stoppa \cite{JS} and Lejmi-Sz\'ekelyhidi \cite{MS}. 

In this paper, we'll prove the openness of the new continuity path introduced in \cite{C1} at $t=0$. And it adds further evidence that the path is the right one to work on. Following from a simple observation, we could choose $\chi = \omega$ in $(\ref{path})$ such that $(\ref{path})$ always has trivial solution at $t=0$. Our purpose of this paper is to prove the following main theorem:
\begin{thm}\label{thm:main}
Suppose $(M, \omega)$ is a closed K\"ahler manifold. Then, for any $t> 0$ sufficiently small, there exist a unique smooth K\"ahler metric $\omega_{\varphi_t}$ such that
\begin{align}\label{eqn:1.1}
t(R_{\varphi_t} - \underline R) - (1-t) (\tr_{\varphi_t} \omega - n) = 0.
\end{align}
\end{thm}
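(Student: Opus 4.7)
My plan is to prove this by an implicit function theorem argument, after a reformulation that avoids the apparent difficulty of the equation changing order between $t = 0$ and $t > 0$. The elementary identity
\[
\tr_{\varphi}\omega - n = -\Delta_{\varphi}\varphi
\]
(where $\Delta_{\varphi}$ denotes the complex Laplacian with respect to $\omega_{\varphi}$) recasts $(\ref{eqn:1.1})$ as
\[
t(R_{\varphi_t} - \underline R) + (1-t)\Delta_{\varphi_t}\varphi_t = 0,
\]
which has the trivial solution $\varphi = 0$ at $t = 0$. Linearizing in $\varphi$ at $(t,\varphi) = (0,0)$ gives $\Delta_{\omega}$, invertible on functions of zero integral. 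A direct implicit function theorem is obstructed, however, by the fact that $R_{\varphi}$ contributes four derivatives of $\varphi$ for $t > 0$ whereas $\Delta_{\varphi}\varphi$ contributes only two; standard Banach-space implicit function theorems between fixed H\"older spaces cannot accommodate this order mismatch.

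To resolve this, I would introduce the auxiliary variable $F = \log(\omega_{\varphi}^n/\omega^n)$ and use the Monge--Amp\`ere equation $\omega_{\varphi}^n = e^F\omega^n$ to define $\varphi = \varphi_F$ by Yau's theorem (normalized by $\int\varphi_F\omega^n = 0$, with the additive constant of $F$ adjusted so that $\int e^F\omega^n = \int\omega^n$). Using the Bochner-type identity $R_{\varphi} = \tr_{\varphi}\Ric(\omega) - \Delta_{\varphi}F$, the main equation is equivalent to $\Psi(t,F) = 0$ where
\[
\Psi(t,F) := (1-t)(n - \tr_{\varphi_F}\omega) + t(R_{\varphi_F} - \underline R).
\]
The decisive computation is the linearization of $\Psi$ in $F$ at $(0,0)$: linearizing Monge--Amp\`ere yields $\partial_F\varphi_F|_0(\dot F) = \Delta_{\omega}^{-1}\dot F$, and combined with $\partial_{\varphi}(\tr_{\varphi}\omega)|_0(\dot\varphi) = -\Delta_{\omega}\dot\varphi$ this gives
\[
\partial_F\Psi(0,0)(\dot F) = \Delta_{\omega}(\Delta_{\omega}^{-1}\dot F) = \dot F,
\]
the identity operator, which is trivially invertible.

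Applying the implicit function theorem to $\Psi$ in suitably chosen H\"older spaces (respecting the normalizations on $F$ and $\varphi_F$) then yields, for every sufficiently small $t > 0$, a unique small $F_t$ solving $\Psi(t,F_t) = 0$; setting $\varphi_t = \varphi_{F_t}$ produces the desired K\"ahler potential, with smoothness following from elliptic regularity for the resulting fourth-order equation at each fixed $t > 0$, and local uniqueness following from the identity linearization. The principal technical obstacle I foresee is the careful choice of function spaces to reconcile the two-derivative gain of the Monge--Amp\`ere solution operator with the four-derivative nature of $R_{\varphi}$: for instance, taking $F \in C^{k+2,\alpha}$ so that $\varphi_F \in C^{k+4,\alpha}$ and $R_{\varphi_F} \in C^{k,\alpha}$ can line up the orders. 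Should a residual loss of derivatives still obstruct a direct Banach IFT, the fallback is a Nash--Moser style iteration, exploiting that the nonlinear corrections in $\Psi$ carry a factor of $t$ and so may be absorbed by tame estimates on the linearized operator.
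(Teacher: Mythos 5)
Your reduction via $F=\log(\omega_{\varphi}^n/\omega^n)$ and the Monge--Amp\`ere solution operator is in substance the same order-reduction the paper performs with its potential $\theta_{\varphi}=-\log(\omega_{\varphi}^n/\omega^n)-\underline R\varphi+c+P_{\varphi}$, and your pointwise identities ($\tr_{\varphi}\omega-n=-\Delta_{\varphi}\varphi$, $R_{\varphi}=\tr_{\varphi}\Ric(\omega)-\Delta_{\varphi}F$, the linearization at $(0,0)$) are correct. The gap is in the decisive step ``$\partial_F\Psi(0,0)=\mathrm{id}$, trivially invertible, hence Banach IFT.'' For $t>0$ the term $tR_{\varphi_F}$ contains the genuinely second-order expression $-t\Delta_{\varphi_F}F$, so for $\Psi(t,\cdot)$ to land in $C^{k,\alpha}$ you must (as you yourself propose) take $F\in C^{k+2,\alpha}$. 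But then the operator you computed, which is the identity on functions, is, as a map between the Banach spaces in which $\Psi$ is actually differentiable, the compact inclusion $C^{k+2,\alpha}\hookrightarrow C^{k,\alpha}$: injective but not surjective, so the implicit function theorem hypothesis fails. If instead you take domain $=$ target $=C^{k,\alpha}$, then $\Psi(t,\cdot)$ is not even defined there for $t>0$. The real structure is a singular perturbation: for $t>0$ the linearization has the form $(1-t)\,\mathrm{id}-t\Delta+tB$ with $B$ bounded, i.e.\ exactly the paper's $\cD F_r|_{\varphi}(u)=-r\Delta_{\varphi}u+(1-r\underline R)u+r(\cdots)$, and its inverse from $C^{\alpha}$ to $C^{2,\alpha}$ degenerates as $t\to0$ (the paper proves the bound $Cr^{-\frac{2-\alpha}{1-\alpha}}$ in Lemma~\ref{lem3}). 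Since the trivial guess solves the equation only to accuracy $O(t)$, the solvable neighborhood produced by any quantitative IFT at the trivial solution shrinks too fast to contain $0$ --- precisely the obstruction described in the paper's introduction.

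The Nash--Moser fallback you mention does not repair this, because the difficulty is not a loss of derivatives (which smoothing operators address) but the competition between the $t$-blow-up of the inverse linearization and the merely $O(t)$ accuracy of the initial approximation. What your proposal is missing, and what the paper supplies, is (i) a refined approximate solution obtained from a formal expansion in the parameter, $\varphi_1=ru_1+\frac{r^2}{2}u_2+\frac{r^3}{6}u_3$ with $\|F_r(\varphi_1)\|_{C^{\alpha}}\le Cr^4$ (Lemma~\ref{lem1}), and (ii) the quantified inverse estimate $\|(\cD F_r|_{\varphi_1})^{-1}\|\le Cr^{-\frac{2-\alpha}{1-\alpha}}$, after which a contraction mapping in a ball of radius $r^{\frac{1}{1-\alpha}}\delta$ with $\alpha=\frac14$ closes the argument (Lemmas~\ref{lem:contraction} and the proof of Theorem~\ref{thm1}). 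Your scheme could in principle be salvaged along the same lines --- expand $F_t$ formally in $t$ to third order and prove a $t$-quantified bound for the inverse of $(1-t)\,\mathrm{id}-t\Delta+tB$ --- but as written the central IFT step does not go through, and with it the claimed existence and local uniqueness for small $t>0$ remain unproved.
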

Notice here, when $t > 0$, $(\ref{eqn:1.1})$ is a 4th order nonlinear elliptic equation while at $t=0$, we get a second order equation. Thus, it's not clear which function space we should choose if we want to apply the inverse function theorem. Fortunately, if we denote $\varphi_0 =0$, then
\begin{align*}
t(R_{\varphi_0} - \underline R) - (1-t) (\tr_{\varphi_0}\omega - n) = t(R_{\varphi_0} - \underline R) \rightarrow 0, \text{ as } t \rightarrow 0
\end{align*}
in any $C^{k}(M)$ norm. It suggests that $\varphi_0$ is very close to a twisted cscK metric when $t> 0$ sufficiently small. Thus, if we take $\varphi_0$ as base point and apply the inverse funcion theorem at $\varphi = \varphi_0$, there's a slight chance that it contains "0" in its neighborhood of image of $\varphi_0$ where every element has a pre-image. However, later we find out that the radius of neighbourhood of $t(R_{\varphi_0} -\underline R)$ which has pre-images decreases faster than $t^2$ while "0" lies in only the radius t neighborhood.

To overcome this difficulty, we'll first introduce basic notions in Section $\ref{sec2}$ and reduce $(\ref{eqn:1.1})$ from a 4th order equation to a second order equation
\begin{align}
r \theta_{\varphi} + \varphi = 0.
\end{align}
Then in Section $\ref{sec3}$ we could expand the above equation in power series of r and collect the same order terms of r to see possible ways of cancelations. Then we could choose $\varphi_1$ closer to the critical point than $\varphi_0$ as shown in Lemma $\ref{lem1}$. Namely, we choose $\varphi_1$ such that "0" is in the $r^4$ neighborhood of $r\theta_{\varphi_1} + \varphi_1$ in $C^\alpha$ space. And in Section $\ref{sec4}$, by intense calculations, we show that the radius of neighborhood of $r\theta_{\varphi_1} + \varphi_1$ which has pre-images is greater than $r^{3+\epsilon}$ for some $\epsilon >0 $. Eventually "0" will fall into the $r^{3+\epsilon}$ neighborhood of image of $\varphi_1$. Thus, it has a pre-image.

Without further notice, the "C" in each estimate means a constant depending on the complex dimension $n$, the background metric $\omega$, the topological constant $\underline{R}$ and $0< \alpha <1$ unless specified.\\

\noindent {\bf Acknowledgement} The author is very grateful to his advisor Prof. X. X. Chen for constant support and encouragement. He also wishes to thank Prof. E. Bedford, Yuanqi Wang and Song Sun for comments on an earlier version of this preprint. During the preparation of this paper, we learned from \cite{C1} that Y. Hashimoto has also announced results similar to Theorem $\ref{thm:main}$.

\section{Preliminary}\label{sec2}

Suppose $(M, \omega)$ is a closed K\"ahler manifold. Denote the space of normalized smooth K\"ahler potentials as
\begin{align}
\sH_{\omega} = \{\varphi \in C^\infty(M) | \omega_{\varphi }  = \omega + \sqrt{-1} \partial \bar \partial \varphi > 0, \int_M \varphi \omega^n = 0\}.
\end{align}
For $\varphi \in \sH_{\omega}$, we denote $R_{\varphi}$ the scalar curvature of $\omega_{\varphi}$ and $\underline R = \frac{[c_1(M)] [\omega]^{[n-1]}}{[\omega]^{[n]}}$.

In \cite{C1}, Chen has introduced a continuity path in $\sH_{\omega}$ for a closed positive (1,1)-form $\chi$ as
\begin{align}\label{eqn:path}
t(R_{\varphi} - \underline{R}) - (1 - t) (\tr_{\varphi} \chi - \underline\chi )= 0, 
\end{align}
where $\underline \chi = \frac{[\chi] [\omega]^{[n-1]}}{[\omega]^{[n]}}$.

In particular, as described in the introduction, we could simply choose the closed positive (1,1)-form to be $\omega$. Thus, $(\ref{eqn:path})$ always has a trivial solution at $t=0$. As in defining the Futaki invariant in \cite{T2}, we could solve the Laplacian equation for any $\varphi \in \sH_{\omega}$
\begin{align}\label{eqn1}
\Delta_{\varphi} f = R_{\varphi} - \underline R.
\end{align}
We denote the solution of $(\ref{eqn1})$ as $\theta_{\varphi}$ with the normalization $\int_M \theta_{\varphi} \omega^n = 0$. Since 
\begin{align}
R_{\varphi} - \underline R &= \tr_{\varphi} (\Ric_{\varphi} - \Ric (\omega) + \Ric (\omega) - \underline R \omega - \underline R  \sqrt{-1} \partial \bar \partial \varphi ),\\
& = \Delta_{\varphi} (- \log \frac{\omega_{\varphi}^n}{\omega^n} - \underline R \varphi) + \tr_{\varphi} (\Ric(\omega) - \underline R \omega),
\end{align}
we have
\begin{align}\label{eqn:def}
\theta_{\varphi} = - \log \frac{\omega_{\varphi}^n}{\omega^n} - \underline R \varphi + \int_M \log \frac{\omega_{\varphi}^n}{\omega^n} \omega^n + P_{\varphi},
 \end{align}
where $P_{\varphi}$ is determined by 
\begin{align}
\Delta_{\varphi} P_{\varphi} = \tr_{\varphi} (\Ric(\omega) - \underline R \omega), \int_M P_{\varphi} \omega^n = 0.
\end{align}
Given the notion of $\theta_{\varphi}$ above, we could reduce the continuity path equation $(\ref{eqn:path})$ with $\chi = \omega$ from a 4th order PDE to a Monge-Amp\`ere type of equation as
\begin{align}
t \theta_{\varphi} + (1-t) \varphi = 0.
\end{align}

Following the discussion above, Theorem $\ref{thm:main}$ will be an easy corollary of the following theorem:
\begin{thm}\label{thm1}
Suppose $(M, \omega)$ is a closed K\"ahler manifold. Then, for any $r> 0$ sufficiently small, there exists a unique $\varphi_r \in \sH_{\omega}$ such that
\begin{align}
r\theta_{\varphi_r} + \varphi_r = 0.
\end{align}
\end{thm}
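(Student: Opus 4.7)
My plan is to prove Theorem~\ref{thm1} by a quantitative inverse function theorem for the nonlinear map $F_r(\varphi) := r\theta_\varphi + \varphi$, which sends a neighborhood of $0$ in $C^{2,\alpha}$ into $C^\alpha$. As the introduction explains, IFT applied directly at the trivial base point $\varphi = 0$ fails: the image ball that IFT produces around $F_r(0) = r\theta_0$ has radius decaying faster than $\|F_r(0)\|_{C^\alpha} \sim r$, so the origin is out of reach. The remedy is to replace $0$ by a much better approximate solution $\varphi_1$ and apply IFT there.

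For the construction of $\varphi_1$, I would seek a formal power series expansion $\varphi(r) = r\psi_1 + r^2\psi_2 + r^3\psi_3 + \cdots$ of the equation $r\theta_\varphi + \varphi = 0$. Substituting into the explicit formula \eqref{eqn:def} for $\theta_\varphi$ and collecting coefficients of each power of $r$, the order-$r$ equation reads $\psi_1 = -\theta_0$, while the order-$r^k$ equations for $k=2,3$ are algebraic in $\psi_k$ with source depending only on $\psi_1, \ldots, \psi_{k-1}$ and their derivatives, because the zeroth-order-in-$r$ part of $F_r$ is the identity. Setting $\varphi_1 := r\psi_1 + r^2\psi_2 + r^3\psi_3$ and estimating the remainder via Taylor's theorem applied to $\theta$, with Schauder control, yields Lemma~\ref{lem1}:
\begin{align}
\|F_r(\varphi_1)\|_{C^\alpha} \leq C r^4.
\end{align}

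For the IFT step at $\varphi_1$, write $\varphi = \varphi_1 + \psi$ and decompose $F_r(\varphi_1 + \psi) = F_r(\varphi_1) + L_r \psi + Q_r(\psi)$, where $L_r := I + r\,D\theta_{\varphi_1}$ is the linearization and $Q_r(\psi)$ collects the quadratic and higher remainder. The equation $F_r(\varphi) = 0$ becomes the fixed-point problem $\psi = L_r^{-1}\bigl(-F_r(\varphi_1) - Q_r(\psi)\bigr)$, which closes by Newton/contraction provided one has (i) a quantitative invertibility estimate $\|L_r^{-1}\|_{C^\alpha \to C^{2,\alpha}} \leq C r^{-\beta}$ for some explicit $\beta$, from scaled Schauder theory applied to the elliptic operator $-r\Delta_{\varphi_1} + (1-r\underline R)I + r\,DP_{\varphi_1}$, and (ii) a nonlinear bound $\|Q_r(\psi)\|_{C^\alpha} \leq C r\|\psi\|_{C^{2,\alpha}}^2$ on a small ball around $\varphi_1$. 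Balancing these shows that IFT produces a $C^\alpha$-image ball of radius $\geq r^{3+\epsilon}$ around $F_r(\varphi_1)$; since $\|F_r(\varphi_1)\|_{C^\alpha} \leq Cr^4 < r^{3+\epsilon}$ for $r$ small, the origin lies in this ball and a solution $\varphi_r$ exists. Uniqueness in the ball is automatic from the contraction principle, and extends to uniqueness in $\sH_\omega$ for small $r$ since any other solution must be $O(r)$ in $C^{2,\alpha}$.

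The main obstacle is the quantitative inverse bound for $L_r$. Because $L_r$ degenerates from a second-order elliptic operator to the identity as $r \to 0$, its inverse has $C^\alpha \to C^{2,\alpha}$ norm blowing up in $r$; pinning down the exponent $\beta$ precisely via rescaled Schauder estimates, handling the nonlocal zeroth-order term $r\,DP_{\varphi_1}$ that comes from the Green's operator of $\Delta_{\varphi_1}$, and balancing $\beta$ against the factor of $r$ in $Q_r$ and the $r^4$ from $F_r(\varphi_1)$ is the technical heart of Section~\ref{sec4}; indeed, the depth to which the formal series must be carried (here, up to order $r^3$) is dictated exactly by how large $\beta$ turns out to be.
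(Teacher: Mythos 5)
Your plan is essentially the paper's proof: the paper constructs the same third-order corrected base point $\varphi_1$ (giving $\|F_r(\varphi_1)\|_{C^{\alpha}(M)}\le Cr^4$, Lemma \ref{lem1}), establishes the quantitative inverse bound $\|(\cD F_r|_{\varphi_1})^{-1}\|\le Cr^{-\frac{2-\alpha}{1-\alpha}}$ for the linearization via a continuity method with Schauder, interpolation and $L^p$ estimates (Lemma \ref{lem3}), and then runs exactly the contraction/Newton iteration and exponent balancing you describe, choosing $\alpha=\frac{1}{4}$ so that the reachable $r^{3+\epsilon}$ neighborhood of $F_r(\varphi_1)$ contains $0$. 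The approach and quantitative structure match the paper throughout.
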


In our paper, we'll repeatedly use schauder estimate of Laplacian equation. Thus, let's introduce it here as the following Lemma:
\begin{lem}\label{lem2}
If $\varphi \in C^{2,\alpha}(M)$ with $\|\varphi\|_{C^{2,\alpha}(M)} \leq \frac{1}{2}$ and $u\in C^{2,\alpha}(M)$ statisfies
\begin{align}\label{eqn:claim}
\Delta_{\varphi} u = f, \int_M u \omega^n = 0
\end{align}
for some $f \in C^{\alpha}(M)$. Then
\begin{align}
\|u\|_{C^{2,\alpha}(M)} \leq C\|f\|_{C^{\alpha}(M)}.
\end{align}
\end{lem}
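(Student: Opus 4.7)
The strategy is to view $\Delta_{\varphi}=g_{\varphi}^{i\bar j}\partial_i\partial_{\bar j}$ as a linear second-order elliptic operator with $C^{\alpha}$ coefficients and apply standard Schauder theory, then absorb the zeroth-order term by exploiting the normalization $\int_M u\,\omega^n=0$.

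First I would verify uniform ellipticity. The smallness hypothesis $\|\varphi\|_{C^{2,\alpha}}\leq \frac12$ forces $\omega_{\varphi}$ to be uniformly equivalent to $\omega$ on $M$, with equivalence constants depending only on $\omega$. Consequently, the inverse-metric coefficients $g_{\varphi}^{i\bar j}$ are well defined, uniformly bounded, and of class $C^{\alpha}$ with norms controlled solely in terms of $\|\varphi\|_{C^{2,\alpha}}$. Covering $M$ by finitely many coordinate charts, applying the classical interior Schauder estimate (Gilbarg--Trudinger, Theorem~6.2) in each chart, and patching with a partition of unity yields the a priori bound
\begin{align*}
\|u\|_{C^{2,\alpha}(M)}\leq C\bigl(\|f\|_{C^{\alpha}(M)}+\|u\|_{C^{0}(M)}\bigr),
\end{align*}
with $C$ depending only on $\omega$ and $\alpha$.

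The remaining task is to absorb $\|u\|_{C^{0}(M)}$ using the normalization. I would argue by contradiction: if no uniform constant worked, one could extract a sequence $\{(\varphi_k,u_k,f_k)\}$ with $\|\varphi_k\|_{C^{2,\alpha}}\leq\frac12$, $\Delta_{\varphi_k}u_k=f_k$, $\int_M u_k\,\omega^n=0$, $\|u_k\|_{C^{2,\alpha}}=1$, and $\|f_k\|_{C^{\alpha}}\to 0$. Arzel\`a--Ascoli furnishes a subsequence along which $\varphi_k\to\varphi_{\infty}$ and $u_k\to u_{\infty}$ in $C^{2}(M)$, with $\|\varphi_{\infty}\|_{C^{2,\alpha}}\leq\frac12$. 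Passing to the limit gives $\Delta_{\varphi_{\infty}}u_{\infty}=0$; since $M$ is compact and $\omega_{\varphi_{\infty}}$ is a genuine K\"ahler metric, $u_{\infty}$ must be constant, and $\int_M u_{\infty}\,\omega^n=0$ then forces $u_{\infty}\equiv 0$. In particular $\|u_k\|_{C^{0}}\to 0$, and the preceding Schauder inequality gives $\|u_k\|_{C^{2,\alpha}}\to 0$, contradicting $\|u_k\|_{C^{2,\alpha}}=1$.

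The only genuine subtlety lies in this compactness step: one must check that the inverse metrics $g_{\varphi_k}^{i\bar j}$ converge in $C^{\alpha}$ strongly enough for the limit equation to be meaningful, and that the limiting operator is still strictly elliptic. Both are automatic from $C^{2}$-convergence of the $\varphi_k$ together with the preserved lower bound on $\omega_{\varphi_k}$. Everything else is textbook linear elliptic theory, so I do not anticipate a serious obstacle.
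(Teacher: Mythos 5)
Your proposal is correct, but the key step---controlling the low-order norm of $u$---is handled quite differently from the paper. The paper's argument is quantitative throughout: after the same global Schauder bound $\|u\|_{C^{2,\alpha}}\leq C(\|u\|_{L^\infty}+\|f\|_{C^\alpha})$, it multiplies the equation by $u$, integrates against $\omega_\varphi^n$, uses the uniform equivalence $\omega_\varphi\sim\omega$ together with the Poincar\'e inequality for the fixed background metric (this is where $\int_M u\,\omega^n=0$ enters) to get $\|u\|_{L^2}\leq C\|f\|_{L^\infty}$, and then runs Moser iteration to upgrade this to $\|u\|_{L^\infty}\leq C\|f\|_{C^\alpha}$. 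You instead absorb $\|u\|_{C^0}$ by a compactness--contradiction argument: normalize $\|u_k\|_{C^{2,\alpha}}=1$, extract a $C^2$-limit via Arzel\`a--Ascoli from the uniform $C^{2,\alpha}$ bounds on $u_k$ and $\varphi_k$, identify the limit as a harmonic function for a limiting uniformly elliptic metric (hence constant by the strong maximum principle, hence zero by the normalization), and contradict the Schauder inequality. Both routes hinge on the same two facts---uniform ellipticity and uniform $C^\alpha$ control of $g_\varphi^{i\bar j}$ on the ball $\|\varphi\|_{C^{2,\alpha}}\leq\frac12$, and the normalization killing the kernel---and both yield a constant depending only on $\omega$, $n$, $\alpha$, uniformly in $\varphi$, which is what the later sections require. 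The paper's route buys an explicit, constructive constant (and the same integration-by-parts/Moser scheme is reused later in the proof of Lemma~\ref{lem3}, so it is not wasted effort), while yours is softer and shorter but non-constructive; since the lemma only asserts existence of some uniform $C$, that loss is harmless here.
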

\begin{proof} Proof of Lemma $\ref{lem2}$. By Schauder estimate \cite{GT}, we can get
\begin{align}
\|u\|_{C^{2,\alpha}(M)} \leq C\big(\|u\|_{L^{\infty}(M)} + \|f\|_{C^{\alpha}(M)}\big).
\end{align}
To bound $\|u\|_{L^{\infty}(M)}$, we first multiply $u$ on both hand sides of $(\ref{eqn:claim})$, integrate against $\omega_{\varphi}^n$, and we get that
\begin{align}
\int_M |\nabla u|_{\varphi}^2 \omega_{\varphi}^n = \int_M -fu \omega_{\varphi}^n \leq C \|f\|_{L^{\infty}(M)} \|u\|_{L^2(M,\omega)}. 
\end{align}
On the other hand,
\begin{align}
\int_M |\nabla u|_{\varphi}^2 \omega_{\varphi}^n \geq \frac{1}{C} \int_M |\nabla u |^2 \omega^n \geq \frac{1}{C} \|u\|_{L^2(M, \omega)}^2.
\end{align}
Thus, combining the above two inequalities, we can get
\begin{align}
\|u\|_{L^2(M, \omega)} \leq C\|f\|_{L^{\infty}(M)}.
\end{align}
Then, by Moser iteration \cite{GT}, we can get that 
\begin{align}
\|u\|_{L^{\infty}(M)} \leq C\big(\|u\|_{L^2(M)} + \|f\|_{L^{\infty(M)}}\big) \leq C\|f\|_{C^{\alpha}(M)}.
\end{align}
This ends the proof.
\end{proof}
\section{Choose the base point} \label{sec3}

Let's first introduce the space we're going to work on. Define for $0< \alpha <1$ and $k \in \bN$
\begin{align}
\sH^{2,\alpha}_{\omega} &= \{\varphi \in C^{2,\alpha}(M)| \omega_{\varphi} = \omega+ \sqrt{-1} \partial \bar \partial \varphi > 0, \int_M \varphi \omega^n = 0\},\\
C_{\omega}^{k, \alpha}(M) &= \{f \in C^{k, \alpha} (M) | \int_M f \omega^n = 0\}.
\end{align}
More generally, $\theta_{\varphi}$ could be defined on the space $\sH^{2,\alpha}_{\omega}$ if we took the definition as in $(\ref{eqn:def})$. Therefore, we define, still denoted by $\theta_{\varphi}$, 
\begin{align*}
\theta: \sH_{\omega}^{2,\alpha} &\rightarrow C_{\omega}^{\alpha}(M)\\
\varphi &\mapsto \theta_{\varphi} =  - \log \frac{\omega_{\varphi}^n}{\omega^n} - \underline R \varphi + \int_M \log \frac{\omega_{\varphi}^n}{\omega^n} \omega^n + P_{\varphi}, 
\end{align*}
where $P_{\varphi} \in C_{\omega}^{2,\alpha}(M) \subset C_{\omega}^{\alpha}(M)$ is determined by 
\begin{align}
\Delta_{\varphi} P_{\varphi} = \tr_{\varphi} (\Ric(\omega) - \underline R \omega), \int_M P_{\varphi} \omega^n = 0.
\end{align}
Define
\begin{align*}
F_r: \sH_{\omega}^{2,\alpha} &\rightarrow C_{\omega}^{\alpha}(M)\\
  \varphi &\mapsto r \theta_{\varphi} + \varphi.
\end{align*}
Denote $\varphi_0 = 0 \in \sH^{2,\alpha}_{\omega}$. As we described the difficulties in the introduction, $\varphi_0$ is not enough for our purpose. We need to find "better" base point to apply inverse function theorem. 

Let 
\begin{align}
\varphi_1 = \varphi_0 + ru_1 + \frac{r^2}{2} u_2 + \frac{r^3}{6} u_3,
\end{align} 
where $u_i's$ are smooth functions on $M$ with $\int_M u_i \omega^n = 0$ that we'll specify later. First we'll expand $F_r(\varphi_1)$ in terms of $r$ at $r = 0$. Denote $u_r = ru_1 + \frac{r^2}{2} u_2 + \frac{r^3}{6} u_3 $. Compute
\begin{align}
\frac{\partial \theta_{\varphi_1}}{\partial r} = - \Delta_{\varphi_1} \dot u_r - \underline R \dot u_r + \int_M \Delta_{\varphi_1} \dot u_r \omega^n+ \cD P|_{\varphi_1} (\dot u_r),
\end{align}
where $\cD P|_{\varphi_1} : C_{\omega}^{2,\alpha}(M) \rightarrow C_{\omega}^{2,\alpha}(M)$ is the linearization of $P_{\varphi}$ at $\varphi = \varphi_1$ and it satisfies
\begin{align}
\Delta_{\varphi_1} \big(\cD P|_{\varphi_1}(u)\big) = \langle \partial \bar \partial u, \partial \bar \partial P_{\varphi_1} - (\Ric(\omega) - \underline R \omega)\rangle_{\varphi_1}, \int_M \big(\cD P|_{\varphi_1}(u)\big) \omega^n = 0.
\end{align}
Take one more derivative of $\theta_{\varphi_1}$, we get
\begin{align}
\frac{\partial^2 \theta_{\varphi_1}}{\partial^2 r} = - (\Delta_{\varphi_1} \ddot u_r - |\partial \bar \partial \dot u_r|_{\varphi_1}^2 )- \underline R \ddot u_r + \int_M ( \Delta_{\varphi_1} \ddot u_r - |\partial \bar \partial \dot u_r|_{\varphi_1}^2 ) \omega^n + \cD P|_{\varphi_1} (\ddot u_r) + (\frac{\partial}{\partial \varphi} \cD P|_{\varphi})|_{\varphi_1} (\dot u_r, \dot u_r)
\end{align}
where the last term is given by the unique solution of the following elliptic equation
\begin{align*}
\Delta_{\varphi_1} f& = 2 \langle \partial \bar \partial \dot u_r,  \partial \bar \partial \big(\cD P|_{\varphi_1} (\dot u_r) \big) \rangle_{\varphi_1} - \dot u_{r, i\bar p} \dot u_{r, p \bar j} \big( P_{\varphi_1, j \bar i} - (\Ric(\omega) - \underline R \omega)_{j \bar i}\big) \\
&- \dot u_{r, i\bar p} \dot u_{r,  j \bar i} \big( P_{\varphi_1, p \bar j} - (\Ric(\omega) - \underline R \omega)_{p \bar j}\big)
\end{align*}
with $\int_M f \omega^n = 0$. Thus, we get the expansion of $F_r(\varphi_1)$ of $r$ at $r=0$, 
\begin{align}
F_r (\varphi_1)  & = r \theta_{\varphi_1} + \varphi_1 \\
& = \varphi_0 + r (u_1 + \theta_{\varphi_0}) + \frac{r^2}{2} (u_2 + 2 \frac{\partial \theta_{\varphi_1}}{\partial r}|_{r = 0} ) + \frac{r^3}{6} (u_3 + 3 \frac{\partial^2 \theta_{\varphi_1}}{\partial^2 r} |_{r=0})  + O(r^4).
\end{align}
It suggests that we should define
\begin{align*} 
u_1 & = - \theta_{\varphi_0}\\
u_2 &= - 2  \frac{\partial \theta_{\varphi_1}}{\partial r}|_{r = 0} = - 2 \big(- \Delta_{\varphi_0} u_1 - \underline R u_1 + \cD P|_{\varphi_0} (u_1) \big)\\
u_3 & = - 3 \frac{\partial^2 \theta_{\varphi_1}}{\partial^2 r} |_{r=0} = -3\big(- \Delta_{\varphi_0} u_2 - \underline R  u_2 + \cD P|_{\varphi_0} (u_2) + |\partial \bar \partial u_1|_{\varphi_0}^2 - \int_M |\partial \bar \partial u_1|_{\varphi_0}^2 \omega^n\\
& + (\frac{\partial}{\partial \varphi} \cD P|_{\varphi} )|_{\varphi_0} (u_1, u_1)\big)
\end{align*}

It's clear from definitions that $u_i's$ are fixed smooth functions with $C^k$ norm bounds only depend on $\varphi_0$. Therefore, we could choose $r> 0$ sufficiently small such that $\varphi_1 \in \sH_{\omega}^{2,\alpha}$ with $\|\varphi_1\|_{C^{2,\alpha}(M)} \leq \frac{1}{2}$. And we expect that $F_r(\varphi_1)$ is $r^4$ close to "0" in appropriate norms. This observation can be made more precise as the following lemma:
\begin{lem}\label{lem1}
Notations as described above, for $r>0$ sufficiently small, we have 
$$\|F_r(\varphi_1)\|_{C^{\alpha}(M)} \leq Cr^4.$$
\end{lem}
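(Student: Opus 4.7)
The plan is to view $g(r) := F_r(\varphi_1(r)) = r\theta_{\varphi_1(r)} + \varphi_1(r)$ as a map from a small interval $[0,r_0]$ into the Banach space $C^{\alpha}(M)$, and to prove the estimate by Taylor's theorem with integral remainder. Because $u_1,u_2,u_3$ are fixed smooth functions depending only on $\omega$ and $\underline R$, the potential $\varphi_1(r) = ru_1 + \tfrac{r^2}{2}u_2 + \tfrac{r^3}{6}u_3$ is a cubic polynomial in $r$ with smooth coefficients, so in particular $\|\varphi_1(r)\|_{C^{2,\alpha}(M)} \leq Cr \leq \tfrac{1}{2}$ for $r$ small, and Lemma~\ref{lem2} will be available on $\omega_{\varphi_1(r)}$.

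The first step is the cancellation. Reading off the expansion of $F_r(\varphi_1)$ in $r$ displayed just before the lemma,
\begin{align*}
F_r(\varphi_1) = r\bigl(u_1 + \theta_{\varphi_0}\bigr) + \tfrac{r^2}{2}\bigl(u_2 + 2\,\partial_r\theta_{\varphi_1}|_{r=0}\bigr) + \tfrac{r^3}{6}\bigl(u_3 + 3\,\partial_r^2\theta_{\varphi_1}|_{r=0}\bigr) + O(r^4),
\end{align*}
the very definitions of $u_1,u_2,u_3$ force $g(0)=g'(0)=g''(0)=g'''(0)=0$. Taylor's theorem in $C^{\alpha}(M)$ then gives
\begin{align*}
F_r(\varphi_1) = g(r) = \frac{1}{6}\int_0^r (r-s)^3\, g^{(4)}(s)\, ds,
\end{align*}
so the lemma will follow once I show $\|g^{(4)}(s)\|_{C^{\alpha}(M)} \leq C$ uniformly for $s\in[0,r_0]$. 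Since $\varphi_1(r)$ is cubic in $r$, $\partial_r^4 \varphi_1 \equiv 0$ and $g^{(4)}(s) = 4\,\partial_r^3\theta_{\varphi_1(s)} + s\,\partial_r^4\theta_{\varphi_1(s)}$, so only the third and fourth $r$-derivatives of $\theta_{\varphi_1(r)}$ need to be estimated.

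The main obstacle is precisely the verification that $\partial_r^k\theta_{\varphi_1(r)}$, $k\leq 4$, exist as continuous $C^{\alpha}(M)$-valued functions of $r$ with uniform bound. Differentiating the definition
\begin{align*}
\theta_{\varphi_1} = -\log\frac{\omega_{\varphi_1}^n}{\omega^n} - \underline R\,\varphi_1 + \int_M\log\frac{\omega_{\varphi_1}^n}{\omega^n}\,\omega^n + P_{\varphi_1}
\end{align*}
in $r$, the first three summands produce only polynomial combinations in $r$ of the components of $\omega_{\varphi_1}^{-1}$ and partial derivatives of $\varphi_1(r)$, all of which are explicitly bounded in $C^{\alpha}$ because $\varphi_1(r)$ is smooth in $r$ and small in $C^{2,\alpha}$. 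The nontrivial ingredient is the $P_{\varphi_1}$ term, whose $r$-derivatives are obtained by formally differentiating its defining equation $\Delta_{\varphi_1}P_{\varphi_1} = \tr_{\varphi_1}(\Ric(\omega) - \underline R\,\omega)$. Each $\partial_r^k P_{\varphi_1(r)}$ satisfies a linear elliptic equation on $(M,\omega_{\varphi_1(r)})$ whose right-hand side is built from coefficients of $\omega_{\varphi_1(r)}^{-1}$, derivatives of $\varphi_1(r)$, and the lower-order derivatives $\partial_r^j P_{\varphi_1(r)}$ with $j<k$; iteratively applying Lemma~\ref{lem2} supplies the required $C^{2,\alpha}$ bound on each level. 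Combining these bounds yields $\|g^{(4)}(s)\|_{C^{\alpha}(M)} \leq C$ uniformly in $s$, and the integral representation of the remainder produces $\|F_r(\varphi_1)\|_{C^{\alpha}(M)} \leq Cr^4$. Conceptually, all that is needed beyond bookkeeping is Lemma~\ref{lem2}; the difficulty is carefully tracking that the constants at each differentiation step depend only on $\omega$, $\underline R$, and $\alpha$.
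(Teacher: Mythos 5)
Your proof is correct, and it uses the same Taylor-with-integral-remainder framework backed by the iterative elliptic bookkeeping of Lemma~\ref{lem2}, but the decomposition is genuinely different, and strictly more costly. The paper does not Taylor-expand $g(r)=F_r(\varphi_1(r))$ directly; instead it Taylor-expands $\theta_{\varphi_1(r)}$ alone to second order, writes $\theta_{\varphi_1}=\theta_{\varphi_0}+r\,\partial_r\theta|_{0}+\tfrac{r^2}{2}\partial_r^2\theta|_{0}+R(r)$, and observes that the definitions of $u_1,u_2,u_3$ make the product $r\theta_{\varphi_1}+\varphi_1$ collapse exactly to $r\,R(r)$. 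Since $R(r)=\tfrac{1}{2}\int_0^r(r-s)^2\,\partial_r^3\theta_{\varphi_1(s)}\,ds$, the bound $\|F_r(\varphi_1)\|_{C^\alpha}\le Cr^4$ then follows from a uniform $C^\alpha$ bound on $\partial_r^3\theta_{\varphi_1(s)}$ \emph{only}. You instead expand $g(r)$ to third order, which forces $g^{(4)}(s)=4\,\partial_r^3\theta_{\varphi_1(s)}+s\,\partial_r^4\theta_{\varphi_1(s)}$ into the remainder, so you need one more $r$-derivative of $\theta_{\varphi_1}$, hence one more layer of differentiating the $P_\varphi$ equation and re-applying Lemma~\ref{lem2}. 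Both routes are valid; the paper's factoring-out of a single $r$ is the more economical one because it shaves off that extra (and rather laborious) order of differentiation. Incidentally, since $\partial_r^4\theta_{\varphi_1(s)}$ enters $g^{(4)}(s)$ multiplied by $s\le r$, your argument would survive with only an $O(s^{-1})$ bound on that term, though you do not need to exploit this given the claimed $O(1)$ estimate.
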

\begin{proof} Proof of Lemma $\ref{lem1}$. It suffices to show that 
\begin{align}
\|\theta_{\varphi_1} - (\theta_{\varphi_0} + r  \frac{\partial \theta_{\varphi_1}}{\partial r}|_{r= 0}  +  \frac{r^2}{2}\frac{\partial^2 \theta_{\varphi_1}}{\partial^2 r}|_{r=0})\|_{C^{\alpha}(M)} \leq C r^3.
\end{align}
By Taylor expansion theorem, we could write the remaining error of the function $\theta_{\varphi_1}$ and its second order taylor expansion as an integral,
\begin{align}
R(x) = \frac{1}{2!}\int_0^r (r-s)^2\big(\frac{\partial^3 \theta_{\varphi_1}}{\partial^3 r}|_{r = s} (x)\big) ds.
\end{align}
So it suffices to show that for any $s \in [0,r]$ with $r > 0$ sufficiently small
\begin{align}
\|  \frac{\partial^3 \theta_{\varphi_1}}{\partial^3 r}|_{r = s}\|_{C^{\alpha}(M)} \leq C.
\end{align}
Denote $\varphi_s = \varphi_0 + su_1 + \frac{s^2}{2} u_2 + \frac{s^3}{6} u_3$ and $u_s = su_1 + \frac{s^2}{2} u_2 + \frac{s^3}{6} u_3 $. Compute 
\begin{align*}
\frac{\partial^3 \theta_{\varphi_1}}{\partial^3 r}|_{r = s} & = - (\Delta_{\varphi_s} u_s^{(3)}- 3 \langle \partial \bar \partial u_s^{(1)}, \partial \bar \partial u_s^{(2)}\rangle_{\varphi_s} + 2 (\partial \bar \partial u_s^{(1)})^{*3})\\
& - \int_M(\Delta_{\varphi_s} u_s^{(3)}- 3 \langle \partial \bar \partial u_s^{(1)}, \partial \bar \partial u_s^{(2)}\rangle_{\varphi_s} + 2 (\partial \bar \partial u_s^{(1)})^{*3}) \omega^n - \underline R u_s^{(3)}\\
&+ \cD P|_{\varphi_s} (u_s^{(3)}) + 2 (\frac{\partial}{\partial \varphi} \cD P|_{\varphi})|_{\varphi = \varphi_s} (u_s^{(2)}, u_s^{(1)}) +  (\frac{\partial}{\partial \varphi} \cD P|_{\varphi})|_{\varphi = \varphi_s} (u_s^{(1)}, u_s^{(2)}) \\
& + (\frac{\partial^2}{\partial^2 \varphi} \cD P|_{\varphi})|_{\varphi = \varphi_s} (u_s^{(1)},u_s^{(1)},u_s^{(1)}).
\end{align*}
It's obvious that the first two lines has uniform $C^{\alpha}$ norm as we expected. Therefore, we need to estimate the last four terms of the above equation. Let's first consider $P_{\varphi_s}$. It satisfies the  Laplacian equation as described in Lemma $\ref{lem2}$, so we get that
\begin{align}\label{eqn:3.1}
\|P_{\varphi_s}\|_{C^{2,\alpha}(M)} \leq C.
\end{align}
Then we can estimate $\cD P|_{\varphi_s} (u)$ using $(\ref{eqn:3.1})$ and Lemma $\ref{lem2}$ since it satisfies the similar Laplacian equation with right hand side depending on second order derivatives of $P_{\varphi_s}$, we can conclude that
\begin{align}
\|\cD P |_{\varphi_s} (u)\|_{C^{2,\alpha}(M)} \leq C \|u\|_{C^{2,\alpha}(M)}.
\end{align}
Thus, we could further estimate the term using the same argument in Lemma $\ref{lem2}$
\begin{align}
\|(\frac{\partial }{\partial \varphi} \cD P|_{\varphi})|_{\varphi_s} (u, v)  \|_{C^{2,\alpha}(M)} \leq C\|u\|_{C^{2,\alpha}(M)} \|v\|_{C^{2,\alpha}(M)}.
\end{align}
Finally, we could estimate the term $(\frac{\partial^2}{\partial^2 \varphi} \cD P|_{\varphi})|_{\varphi_s} (u, v, w)$ which satisfies the equation
\begin{align}
\Delta_{\varphi_s} f &= \langle \partial \bar \partial w, \partial \bar \partial \big( (\frac{\partial }{\partial \varphi} \cD P|_{\varphi})|_{\varphi_s} (u, v)  \big) \rangle_{\varphi_s} + \langle \partial \bar \partial u, \partial \bar \partial \big( (\frac{\partial }{\partial \varphi} \cD P|_{\varphi})|_{\varphi_s} (v,w)  \big) \rangle_{\varphi_s} \\
&+ \langle \partial \bar \partial v, \partial \bar \partial \big( (\frac{\partial }{\partial \varphi} \cD P|_{\varphi})|_{\varphi_s} (u, w)  \big) \rangle_{\varphi_s} + \partial \bar \partial v * \partial \bar \partial w * \partial \bar \partial \big(\cD P|_{\varphi_s}(u) \big)\\
& + \partial \bar \partial u * \partial \bar \partial w * \partial \bar \partial \big(\cD P|_{\varphi_s}(v) \big) + \partial \bar \partial v * \partial \bar \partial u * \partial \bar \partial \big(\cD P|_{\varphi_s}(w) \big)\\
& + \partial \bar \partial u * \partial \bar \partial v * \partial \bar \partial w * \big(\partial \bar \partial P_{\varphi_s} - \Ric(\omega) - \underline{R} \omega \big), \int_M f \omega^n = 0.
\end{align}
Thus by the Lemma $\ref{lem2}$, we can conclude that
\begin{align}
\|(\frac{\partial^2}{\partial^2 \varphi} \cD P|_{\varphi})|_{\varphi_s} (u, v, w)\| \leq C\|u\|_{C^{2,\alpha}(M)}\|v\|_{C^{2,\alpha}(M)}\|w\|_{C^{2,\alpha}(M)}.
\end{align}
Since $\|u_s^{(i)}\|_{C^{2,\alpha}(M)} \leq C$ for $1 \leq i \leq 3$,
\begin{align}
 \|\frac{\partial^3 \theta_{\varphi_1}}{\partial^3 r}|_{r = s}\|_{C^{\alpha}(M)} \leq C. 
\end{align}
Thus it ends the proof of the lemma.
\end{proof}

\section{Proof of Theorem $\ref{thm:main}$} \label{sec4}
In last section, we have shown that for $r> 0$ sufficiently small, $\|F_r(\varphi_1)\|_{C^{\alpha}(M)} \leq Cr^4$. Next we'll construct a contract map defined on a $r^{1+ \epsilon}$ neighborhood of $\varphi_1$ in $C^{2,\alpha}$ space, which is similar to the proof of inverse function theorem. Since $F_r(\varphi_1)$ is $r^4$ small in $C^{\alpha}$ norm, we could start the iterating process from $\varphi_1$ and keep every following term stay within the precribed $r^{1+\epsilon}$ neighborhood of $\varphi_1$.

First, we have to understand the linearization of $F_r: \sH^{2,\alpha}_{\omega} \rightarrow C^{\alpha}_{\omega}(M)$ at $\varphi =\varphi_1$. Compute
\begin{align*}
\cD F_r|_{\varphi_1} : C_{\omega}^{2,\alpha} (M) &\rightarrow C_{\omega}^{\alpha}(M)\\
& u \mapsto - r\Delta_{\varphi_1} u + (1- r \underline R) u + r \big(\int_M (\Delta_{\varphi_1} u )\omega^n + \cD P|_{\varphi_1} (u) \big),
\end{align*}
where $\cD P|_{\varphi_1} (u)$ satisfies
\begin{align}\label{eqn7}
\Delta_{\varphi_1} \big(\cD P|_{\varphi_1} (u)\big) = \langle \partial \bar \partial u, \big(\partial \bar \partial P_{\varphi_1} - (\Ric(\omega) - \underline R \omega)\big)\rangle_{\varphi_1}, \int_M \big(\cD P|_{\varphi_1} (u) \big) \omega^n = 0.
\end{align}

We summarize the properties of $\cD F_r|_{\varphi_1}$ as the following lemma:
\begin{lem}\label{lem3}
Suppose $0< \alpha <1$. Then, for $r> 0$ sufficiently small, the linearizaiton of $F_r: \sH^{2,\alpha}_{\omega} \rightarrow C^{\alpha}_{\omega}(M)$ at $\varphi= \varphi_1$, $\cD F_r|_{\varphi_1}: C_{\omega}^{2,\alpha} (M) \rightarrow C_{\omega}^{\alpha}(M)$, is injective and also surjective. Moreover, the operator norm of the inverse of $\big(\cD F_r|_{\varphi_1}\big)$ has the upper bound
$$\|\big( \cD F_r |_{\varphi_1}\big)^{-1}\|\leq C r^{- \frac{2-\alpha}{1-\alpha}}.$$
\end{lem}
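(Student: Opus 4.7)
The plan is to prove Lemma 3.3 in three steps: (i) verify that $\cD F_r|_{\varphi_1}$ is Fredholm of index $0$; (ii) establish injectivity, hence bijectivity by the Fredholm alternative; and (iii) derive the quantitative inverse bound by combining Schauder estimates with the maximum principle.

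For Fredholmness, I would decompose
\begin{equation*}
\cD F_r|_{\varphi_1} = r\,\widetilde{\Delta}_{\varphi_1} + (1-r\underline R)\,\mathrm{id} + r\,\cD P|_{\varphi_1},
\end{equation*}
where $\widetilde{\Delta}_{\varphi_1} u := -\Delta_{\varphi_1} u + \int_M(\Delta_{\varphi_1} u)\,\omega^n$ is the zero-mean corrected Laplacian, a bijection $C^{2,\alpha}_\omega(M) \to C^\alpha_\omega(M)$ by standard elliptic theory. The inclusion $\mathrm{id}: C^{2,\alpha}_\omega \hookrightarrow C^\alpha_\omega$ is compact, and $\cD P|_{\varphi_1}$ factors through the bounded map $C^{2,\alpha}_\omega \to C^{2,\alpha}_\omega$ (furnished by Lemma 2.1 applied to its defining Laplace equation) followed by the same compact inclusion, hence is itself compact. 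Therefore $\cD F_r|_{\varphi_1}$ is a compact perturbation of a Fredholm index-zero operator and has Fredholm index $0$. Injectivity for small $r$ will follow from the same pair of inequalities used for the inverse bound below, applied with $f=0$ to force $u\equiv 0$; hence bijectivity follows from the Fredholm alternative.

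For the quantitative inverse bound, given $f \in C^\alpha_\omega$ with $u := (\cD F_r|_{\varphi_1})^{-1}(f)$, the maximum principle applied at an extremum of $u$ (where the $-r\Delta_{\varphi_1} u$ term has favorable sign), after bounding $|\int_M(\Delta_{\varphi_1} u)\omega^n|$ by $C\|u\|_{L^\infty}$ via integration by parts against $\omega^n$ and $\|\cD P|_{\varphi_1}(u)\|_{L^\infty}$ by $C\|u\|_{C^{2,\alpha}}$, yields $\|u\|_{L^\infty} \leq C\|f\|_{L^\infty} + Cr\|u\|_{C^{2,\alpha}}$. Rearranging the equation to Laplace form $\Delta_{\varphi_1} u = r^{-1}[(1-r\underline R)u - f] + \int_M(\Delta_{\varphi_1} u)\omega^n + \cD P|_{\varphi_1}(u)$ and applying Lemma 2.1 gives $\|u\|_{C^{2,\alpha}} \leq Cr^{-1}\|u\|_{C^\alpha} + Cr^{-1}\|f\|_{C^\alpha}$ after absorbing the $\cD P$ contribution. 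Inserting the interpolation $\|u\|_{C^\alpha} \leq \epsilon\|u\|_{C^{2,\alpha}} + C\epsilon^{-\alpha/(1-\alpha)}\|u\|_{L^\infty}$ (interpolating between $C^0$ and $C^1$) with $\epsilon$ of order $r$, absorbing the resulting $\|u\|_{C^{2,\alpha}}$-term into the left-hand side, and substituting the maximum-principle bound for $\|u\|_{L^\infty}$, should produce the claimed $\|u\|_{C^{2,\alpha}} \leq Cr^{-(2-\alpha)/(1-\alpha)}\|f\|_{C^\alpha}$.

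The principal difficulty will be the nonlocal term $r\,\cD P|_{\varphi_1}(u)$: although carrying an explicit factor $r$, its $C^\alpha$-norm is bounded only by $C\|u\|_{C^{2,\alpha}}$, so a naive Schauder argument does not close. A refined $L^\infty$ bound on $\cD P|_{\varphi_1}(u)$, obtained by integrating by parts in its Green-function representation to reduce the derivative order hitting $u$, will likely be needed to compensate; the specific exponent $(2-\alpha)/(1-\alpha)$ arises from balancing the Schauder blowup as $r \to 0$ against the interpolation weight required to absorb this nonlocal perturbation, and it is this delicate absorption step --- rather than the Fredholm or max-principle ingredients --- that governs the overall loss.
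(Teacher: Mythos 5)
Your Fredholm decomposition (zero-mean corrected Laplacian, which is invertible, plus compact perturbations coming from the inclusion $C^{2,\alpha}_\omega\hookrightarrow C^\alpha_\omega$ and from $\cD P|_{\varphi_1}$) is a legitimate and arguably cleaner route to ``injective $\Rightarrow$ surjective'' than the paper's continuity method for the family $L_s$. But both your injectivity claim and the operator-norm bound are made to rest on the quantitative a priori estimate, and that estimate, as you have set it up, does not close. Write $A=\|u\|_{C^{2,\alpha}}$, $B=\|u\|_{C^\alpha}$, $D=\|u\|_{L^\infty}$, $F=\|f\|_{C^\alpha}$. Your three ingredients are $D\leq CF+CrA$ (maximum principle, with $\|\cD P|_{\varphi_1}(u)\|_{L^\infty}\leq C A$), $A\leq Cr^{-1}B+Cr^{-1}F$ (Schauder), and $B\leq \epsilon A+C\epsilon^{-\alpha/(1-\alpha)}D$. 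Taking $\epsilon\sim r$ and substituting gives $A\leq Cr^{-1}F+Cr^{-1/(1-\alpha)}D$, and then inserting your $L^\infty$ bound produces the term $Cr^{-1/(1-\alpha)}\cdot rA=Cr^{-\alpha/(1-\alpha)}A$ on the right, whose coefficient blows up as $r\to 0$ and cannot be absorbed. Moreover your Schauder step already presupposes that the order-one term $\cD P|_{\varphi_1}(u)$ can be ``absorbed'' while you simultaneously state that its $C^\alpha$ norm is only controlled by $\|u\|_{C^{2,\alpha}}$ with a non-small constant; as written these two assertions are inconsistent, and you defer the resolution to a ``refined $L^\infty$ bound'' that you do not supply. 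That deferral is not harmless: the pointwise/$L^\infty$ bound you hope for is exactly where the difficulty sits, since the Green-function representation of $\cD P|_{\varphi_1}$ is a Calder\'on--Zygmund operator, which is bounded on $L^p$ ($1<p<\infty$) and on $C^\alpha$ but not from $L^\infty$ to $L^\infty$. A further symptom that the exponents were not tracked: if your maximum-principle bound $D\lesssim F$ did hold, the scheme would yield $r^{-1/(1-\alpha)}$, not the claimed $r^{-(2-\alpha)/(1-\alpha)}$.

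The paper closes exactly these two points differently. First, it proves (its Lemma 4.2) the zeroth-order bound $\|\cD P|_{\varphi}(u)\|_{L^p}\leq C_p\|u\|_{L^p}$ by integrating by parts in the Green representation, and then obtains the $L^\infty$ estimate not by the maximum principle but by multiplying the equation by $|u|^p$ on $\{u>0\}$ and $\{u<0\}$, integrating, and using $W^{2,p}$ theory plus Sobolev embedding; this yields $\|u\|_{L^\infty}\leq Cr^{-1}\|L_su\|_{L^\infty}$ with no higher norm of $u$ on the right, which is what breaks your circularity (and supplies the extra $r^{-1}$ that produces the exponent $(2-\alpha)/(1-\alpha)$). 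Second, in the Schauder step it absorbs the nonlocal term via the observation $\|\cD P|_{\varphi_1}(u)\|_{C^\alpha}\leq C\|\partial\bar\partial u\|_{L^\infty}\leq \frac{1}{4C_0}\|u\|_{C^{2,\alpha}}+C\|u\|_{L^\infty}$, i.e.\ the term costs only two derivatives of $u$, so interpolation with an $r$-independent small constant suffices. Injectivity is then handled by a separate, elementary $L^2$ integration-by-parts argument (again using Lemma 4.2), independent of the quantitative bound. If you import the $L^p$ bound on $\cD P$ and replace your maximum-principle step by the paper's $L^p$ iteration (or prove and use a genuine $C^\alpha\to C^\alpha$ bound for the Green-kernel operator), your Fredholm framework would then give the surjectivity without the continuity method; as it stands, the central estimate is not established.
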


Before proving Lemma $\ref{lem3}$, we'll need the estimate of $\cD P|_{\varphi}(u)$ for $\|\varphi\|_{C^{2,\alpha}(M)} \leq \frac{1}{2}$. We summarize it as the following lemma:
\begin{lem}\label{lem:Lp}
Suppose $\|\varphi\|_{C^{2,\alpha}(M)} \leq \frac{1}{2}$, then we have the estimate for any $1 < p < \infty$, 
\begin{align}
\|\big(\cD P|_{\varphi} (u)\big) \|_{L^{p}(M)} \leq C_p \| u\|_{L^{p}(M)}.
\end{align}
\end{lem}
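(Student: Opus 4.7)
The plan is to use a standard duality argument, exploiting the special algebraic form of the right-hand side of the equation for $g := \cD P|_{\varphi}(u)$. A key simplification I would use at the outset: writing $T := \partial\bar\partial P_{\varphi} - (\Ric(\omega) - \underline R \omega)$, the defining equation for $P_{\varphi}$ gives $\tr_{\varphi} T = \Delta_{\varphi} P_{\varphi} - \tr_{\varphi}(\Ric(\omega) - \underline R \omega) = 0$. Combined with the standard pointwise identity $\langle \alpha, \beta\rangle_{\varphi} \omega_{\varphi}^n = (\tr_{\varphi}\alpha)(\tr_{\varphi}\beta)\omega_{\varphi}^n - n(n-1)\,\alpha \wedge \beta \wedge \omega_{\varphi}^{n-2}$ for real $(1,1)$-forms on a K\"ahler manifold, the equation for $g$ reduces to
$$\Delta_{\varphi} g \cdot \omega_{\varphi}^n = -n(n-1)\,\sqrt{-1}\partial\bar\partial u \wedge T \wedge \omega_{\varphi}^{n-2}.$$
Crucially, $T \wedge \omega_{\varphi}^{n-2}$ is a $d$-closed $(n{-}1,n{-}1)$-form: $T$ is a sum of the $d$-exact form $\partial\bar\partial P_{\varphi}$ and the closed forms $\Ric(\omega)$, $\omega$, and $\omega_{\varphi}$ itself is closed.

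For the duality step, fix $1/p + 1/q = 1$ and any test function $\psi \in L^q(M)$ with $\int_M \psi\,\omega^n = 0$ (which suffices since $\int_M g\,\omega^n = 0$). I would solve the auxiliary Poisson equation
$$\Delta_{\varphi} w = \psi \cdot \frac{\omega^n}{\omega_{\varphi}^n}, \qquad \int_M w\,\omega_{\varphi}^n = 0.$$
The right-hand side has zero $\omega_{\varphi}^n$-mean, so this is solvable, and since $\|\varphi\|_{C^{2,\alpha}} \leq \frac{1}{2}$ forces the coefficients $g_{\varphi}^{i\bar j}$ into $C^{\alpha}(M)$ with uniform bounds, the classical Calder\'on--Zygmund $L^q$-theory yields $\|w\|_{W^{2,q}(M)} \leq C_p \|\psi\|_{L^q(M)}$. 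Self-adjointness of $\Delta_{\varphi}$ against $\omega_{\varphi}^n$ together with the reduction above then give
$$\int_M g\psi\,\omega^n = \int_M g\,\Delta_{\varphi} w\,\omega_{\varphi}^n = \int_M w\,\Delta_{\varphi} g\,\omega_{\varphi}^n = -n(n-1)\int_M w\,\sqrt{-1}\partial\bar\partial u \wedge T \wedge \omega_{\varphi}^{n-2}.$$

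The heart of the argument is to transfer $\partial\bar\partial$ from $u$ onto $w$ in this last integral. Since $T \wedge \omega_{\varphi}^{n-2}$ is $d$-closed, Stokes' theorem on the closed manifold $M$, applied twice (with $(0,2)$-type terms vanishing for dimension reasons), yields
$$\int_M w\,\sqrt{-1}\partial\bar\partial u \wedge T \wedge \omega_{\varphi}^{n-2} = \int_M u\,\sqrt{-1}\partial\bar\partial w \wedge T \wedge \omega_{\varphi}^{n-2}.$$
By H\"older's inequality and the uniform pointwise bounds on $T$ following from Lemma \ref{lem2} applied to $P_{\varphi}$, the right-hand side is bounded by $C_p\|u\|_{L^p}\|w\|_{W^{2,q}} \leq C_p\|u\|_{L^p}\|\psi\|_{L^q}$. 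Taking the supremum over admissible $\psi$ then yields $\|\cD P|_{\varphi}(u)\|_{L^p} \leq C_p\|u\|_{L^p}$ by duality.

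The main obstacle — i.e., the only step that is not a routine application of elliptic $L^p$-theory — is the final integration by parts transferring $\partial\bar\partial$ from $u$ to $w$. This step succeeds precisely because of the $d$-closedness of $T \wedge \omega_{\varphi}^{n-2}$, which is a \emph{structural} feature of the linearization defining $\cD P|_{\varphi}$ (specifically, that $T$ is built from $\partial\bar\partial P_{\varphi}$ together with representatives of fixed cohomology classes). Without this closedness, the Stokes argument would produce an extra term containing a derivative of $u$, and an $L^p \to L^p$ bound would be out of reach.
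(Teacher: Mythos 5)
Your argument is correct, but it reaches the estimate by a genuinely different route than the paper. The paper uses the same structural feature you isolate --- that $T=\sqrt{-1}\partial\bar\partial P_{\varphi}-(\Ric(\omega)-\underline R\,\omega)$ is closed and satisfies $\tr_{\varphi}T=0$ --- but in its divergence form rather than your wedge-product form: it rewrites the right-hand side as the double divergence $\big(u(P_{\varphi,i\bar j}-(\Ric(\omega)-\underline R\omega)_{i\bar j})\big)_{,\bar i j}$, represents $\cD P|_{\varphi}(u)$ via the Green function $G_{\varphi}$ of $\omega_{\varphi}$, moves both derivatives onto $G_{\varphi}$ by integration by parts, and then invokes the Calder\'on--Zygmund $L^p$-boundedness, with norms uniform in $\varphi$, of the operator $f\mapsto\int_M (G_{\varphi}(x,y))_{,\bar i j}f(y)\,\omega_{\varphi}^n$, i.e.\ of second derivatives composed with $\Delta_{\varphi}^{-1}$. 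Your proof is essentially the adjoint formulation of the same estimate: instead of asserting kernel bounds for the variable-coefficient Green function, you pay with the uniform $W^{2,q}$ estimate for the auxiliary problem $\Delta_{\varphi}w=\psi\,\omega^n/\omega_{\varphi}^n$, and you encode trace-freeness and closedness of $T$ through the pointwise identity and Stokes' theorem rather than through the vanishing of the divergence of $T$. What each buys: the paper's route gives the $L^p$ bound directly with no duality bookkeeping, while yours is somewhat more self-contained, using only standard global $W^{2,q}$ theory for $\Delta_{\varphi}$ with $C^{\alpha}$ coefficients; both ultimately rest on the same uniform elliptic $L^q$ input. Two points you should spell out to make the write-up complete: first, recovering $\|\cD P|_{\varphi}(u)\|_{L^p}$ from the supremum over mean-zero $\psi$ requires splitting a general $\psi$ into its $\omega^n$-average plus a mean-zero part and using $\int_M \cD P|_{\varphi}(u)\,\omega^n=0$; second, the bound $\|w\|_{W^{2,q}}\leq C_q\|\psi\|_{L^q}$ needs, in addition to the interior $L^q$ estimate, a uniform bound on $\|w\|_{L^q}$ itself (for instance by the $L^2$-plus-Moser argument of Lemma \ref{lem2} or its $L^q$ analogue) --- the same point the paper passes over quickly when it claims uniform norms for its operator $T_{\bar i j}$.
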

\begin{remark}
Since $\omega$ and $\omega_{\varphi}$ are equivalent metrics if $\|\varphi\|_{C^{2,\alpha}(M)}\leq \frac{1}{2}$, we make no efforts to distinguish between $L^p$ spaces with respect to the two metrics hereafter.
\end{remark}
\begin{proof}

We first introduce the Green function $G_{\varphi}(x, y)$ of the metric $\omega_{\varphi}$. Then we define 
\begin{align}
T (u) (x)& = \int_M G_{\varphi}(x , y) \big(u(P_{\varphi, i\bar j} - (\Ric(\omega) - \underline R \omega)_{i\bar j} ) \big)_{, \bar i  j}(y) \omega_{\varphi}^n \\
& = \int_M (G_{\varphi}(x,y))_{, \bar i j} \big( u (P_{\varphi, i\bar j} - (\Ric(\omega) - \underline R \omega)_{i\bar j} )\big) (y) \omega_{\varphi}^n.
\end{align}
Since 
\begin{align}
\Delta_{\varphi} \big(\cD P|_{\varphi} (u)\big) =\big(u(P_{\varphi, i\bar j} - (\Ric(\omega) - \underline R \omega)_{i\bar j} ) \big)_{, \bar i  j} , \int_M \big(\cD P|_{\varphi} (u) \big) \omega^n = 0, 
\end{align}
we have 
\begin{align}\label{eqn6}
\cD P|_{\varphi} (u) = T(u) - \int_M T(u )\omega^n
\end{align}
For $i, j  \in \bN$, we define the operator $$T_{\bar i j} f = \int_M (G_{\varphi}(x,y))_{, \bar i j} f (y) \omega_{\varphi}^n.$$

$T_{\bar i j}$ is a Calderon-Zygmund(\cite{GT}) operator which maps $L^p$ functions to $L^p $ functions for any $1 < p < \infty$. Moreover we can show $T_{\bar i j}$ has uniform norms. To see this, we consider the Laplacian equation
\begin{align}\label{eqn5}
\Delta_{\varphi} u = f, \int_M u \omega_{\varphi}^n = 0.
\end{align} 
Thus, we see that the solution satisfies
\begin{align}
\frac{\partial^2}{\partial z_{\bar i}\partial z_j } u (x)= \big(T_{\bar i j} f \big)(x).
\end{align}
So it suffices to show the uniform $W^{2,2}$ estimates of $(\ref{eqn5})$, which follows from the fact that $\|\varphi\|_{C^{2,\alpha}(M)} \leq \frac{1}{2}$ and the standard $L^p$ theory of elliptic equation(\cite{GT}). We have the estimate for any $p\in  (1,+ \infty)$
\begin{align}
\|T_{\bar i j} f\|_{L^p(M)} \leq C_p \|f\|_{L^p(M)}.
\end{align}
Thus, taking advantages of the above estimate, we can get
\begin{align}
\|T(u)\|_{L^{p}(M)} & \leq \sum_{k,l} C_p \|u\big( g_{\varphi}^{ i \bar l} g_{\varphi}^{k \bar j} (P_{\varphi, i\bar j} - (\Ric(\omega) - \underline R \omega)_{i\bar j})\big)\|_{L^p(M)}\\
&\leq C_p \|u\|_{L^p(M)}.
\end{align}
Thus, we have for any $1 <p < \infty$
\begin{align}
\|\big(\cD P|_{\varphi} (u)\big) \|_{L^{p}(M)} \leq C_p \| u\|_{L^{p}(M)}.
\end{align}
This ends the proof of Lemma $\ref{lem:Lp}$.
\end{proof}

Now we can prove Lemma $\ref{lem3}$.

\begin{proof}Proof of Lemma $\ref{lem3}$. First we show that $\cD F_r|_{\varphi_1}$ is injective. Suppose there exists $ u \in C_{\omega}^{2,\alpha}(M)$ such that
\begin{align} \label{eqn3}
- r\Delta_{\varphi_1} u + (1- r \underline R) u + r \big(\int_M (\Delta_{\varphi_1} u )\omega^n + \cD P|_{\varphi_1} (u) \big) = 0.
\end{align}
It suffices to show that $u = 0$. Multiply $u$ on both hand sides of $(\ref{eqn3})$ and integrate against $\omega_{\varphi_1}^n$. 
\begin{align}
0 &= r\int_M |\nabla u|_{\varphi_1}^2 \omega_{\varphi_1}^n + (1- r \underline R) \int_M u^2 \omega_{\varphi_1}^n + r \big(\int_M (\Delta_{\varphi_1} u )\omega^n \big) \big( \int_M u \omega_{\varphi_1}^n\big) + r \int_M \big(\cD P|_{\varphi_1} (u)\big) u \omega_{\varphi_1}^n\\
&\geq (1- r \underline R) \int_M u^2 \omega_{\varphi_1}^n + r \{ \big(\int_M (\Delta_{\varphi_1} u )\omega^n \big) \big( \int_M u \omega_{\varphi_1}^n\big)  +  \int_M \big(\cD P|_{\varphi_1} (u)\big) u \omega_{\varphi_1}^n\}. \label{eqn4}
\end{align}
We focus on estimates of the later two terms in $(\ref{eqn4})$. Consider 
\begin{align*}
\int_M (\Delta_{\varphi_1} u) \omega^n &= \int_M (\Delta_{\varphi_1}u) (\frac{\omega^n}{\omega_{\varphi_1}^n} )\omega_{\varphi_1}^n = \int_M u (\Delta_{\varphi_1} \frac{\omega^n}{\omega_{\varphi_1}^n})\omega_{\varphi_1}^n \\
& = \int_M u \frac{\omega^n}{\omega_{\varphi_1}^n}g_{\varphi_1}^{i\bar j} \big( - g_{\varphi_1}^{k \bar l} \varphi_{1, k\bar l i \bar j} + g_{\varphi_1}^{k\bar p} g_{\varphi_1}^{q\bar l} \varphi_{1, \bar p q \bar j} \varphi_{1, k \bar l i} + g_{\varphi_1}^{\bar p q} g_{\varphi_1}^{k\bar l}\varphi_{1, \bar p q \bar j}  \varphi_{1, k\bar l i}\big)  \omega_{\varphi_1}^n\\
&\geq - Cr \big(\int_M |u| \omega_{\varphi_1}^n\big)
\end{align*}
where the derivatives are covariant derivatives of $\omega$. Thus, we have that
\begin{align}
r  \big(\int_M (\Delta_{\varphi_1} u )\omega^n \big) \big( \int_M u \omega_{\varphi_1}^n\big) \geq -Cr^2\int_M u^2 \omega_{\varphi_1}^n
\end{align}
To estimate the last term of $(\ref{eqn4})$, we need the following estimate of $(\cD P|_{\varphi_1})$. 

Choosing $r>0$ sufficiently small, we can get $\|\varphi_1\|_{C^{2,\alpha}(M)} \leq \frac{1}{2}$. Using Lemma $\ref{lem:Lp}$, we get 
\begin{align}
\|\big(\cD P|_{\varphi_1}(u)\big)\|_{L^2(M)} \leq C \|u\|_{L^2(M)}. 
\end{align}
Thus for the last term in $(\ref{eqn4})$ we have the estimate 
\begin{align}
\int_M \big(\cD P|_{\varphi_1} (u)\big) u \omega_{\varphi_1}^n \geq  - C\int_M u^2 \omega_{\varphi_1}^n.
\end{align}
Therefore, combining the estimates above, we have that 
\begin{align}
0 \geq (1 - Cr) \int_M u^2 \omega_{\varphi_1}^n.
\end{align}
It implies that when $r >0$ sufficiently small, we have that $u = 0$. So we have proved the injectivity of $\big(\cD F_r |_{\varphi_1}\big)$.

Next, we show the surjectivity of $\big(\cD F_r |_{\varphi_1}\big)$ and the upper bound of $\|\big(\cD F_r |_{\varphi_1}\big)^{-1}\|$ together. For $f \in C^{\alpha}_{\omega}(M)$, we'll use continuity method to solve the equation 
\begin{align}
\cD F_r|_{\varphi_1} (u) = f.
\end{align}
Define for $s \in [0,1]$,
\begin{align}
L_s: C^{2,\alpha}(M) &\rightarrow C^{\alpha}(M)\\
u & \mapsto - r\Delta_{\varphi_1} u + (1- r \underline R) u + sr \big( \int_M (\Delta_{\varphi_1} u) \omega^n  +  \cD P|_{\varphi_1} (u) \big).
\end{align}
First, we show that for any $s \in [0,1]$
\begin{align}\label{eqn:continuity}
\|u\|_{C^{2,\alpha}(M)} \leq C_r \|L_s u\|_{C^{\alpha}(M)}.
\end{align}
From the definition of $L_s$, we get that, 
\begin{align}\label{eqn10}
\Delta_{\varphi_1} u = - \frac{1}{r}L_s u + \frac{1- r\underline R}{r} u + s\big( \int_M (\Delta_{\varphi_1} u) \omega^n  +  \cD P|_{\varphi_1} (u) \big).
\end{align}
Since we choose $r>0$ sufficiently small s.t. $\|\varphi_1\|_{C^{2,\alpha}(M)} \leq \frac{1}{2}$, we can get from Schauder estimate,
\begin{align*}
\|u\|_{C^{2,\alpha}(M)} & \leq C\big(\|\Delta_{\varphi_1} u\|_{C^{\alpha}(M)} + \|u\|_{L^{\infty}(M)}\big)\\
 & \leq C\big(\frac{1}{r}\|L_s u\|_{C^{\alpha}(M)} + \frac{1}{r} \|u\|_{C^{\alpha}(M)} + |\int_M (\Delta_{\varphi_1} u )\omega^n| +\| \big(\cD P|_{\varphi_1}(u) \big) \|_{C^{\alpha}(M)} + \|u\|_{L^{\infty}(M)}\big)\\
& \leq C_0\big(\frac{1}{r}\|L_s u\|_{C^{\alpha}(M)} +  \frac{1}{r} \|u\|_{C^{\alpha}(M)} + \| \big(\cD P|_{\varphi_1}(u) \big) \|_{C^{\alpha}(M)} + \|u\|_{L^{\infty}(M)} \big).
\end{align*}
By interpolations \cite{GT}, we have
\begin{align}\label{eqn8}
\|u\|_{C^{\alpha}(M)} \leq \frac{r}{4C_0} \|u\|_{C^{2,\alpha}(M)} + C r^{- \frac{\alpha}{1-\alpha}} \|u\|_{L^{\infty}(M)}.
\end{align}
Also, for term $\|\cD P|_{\varphi_1}(u)\|_{C^{\alpha}(M)}$, since it satisfies equation $(\ref{eqn7})$, we have estimate
\begin{align}\label{eqn9}
\|\cD P|_{\varphi_1} (u)\|_{C^{\alpha}(M)} &\leq C \|\langle \partial \bar \partial u, \big(\partial \bar \partial P_{\varphi_1} - (\Ric(\omega) - \underline R \omega)\big)\rangle_{\varphi_1}\|_{L^{\infty}(M)}\\
& \leq C\|\partial \bar \partial u\|_{L^{\infty}(M)} \leq \frac{1}{4C_0} \|u\|_{C^{2,\alpha}(M)} + C\|u\|_{L^{\infty}(M)}.
\end{align}
Combining estimates of $(\ref{eqn8})$ and $(\ref{eqn9})$, we have that
\begin{align}
\|u\|_{C^{2,\alpha}(M)} \leq C\big( \frac{1}{r}\|L_s u\|_{C^{\alpha}(M)} + r^{-\frac{1}{1-\alpha}} \|u\|_{L^{\infty}(M)}\big).
\end{align}
Now we focus on estimates of $\|u\|_{L^{\infty}(M)}$. For $p>1$, We could first multiply $|u|^{p}$ on both hand sides of $(\ref{eqn10})$ and integrate against $\omega_{\varphi_1}^n$ on the region $\{u> 0\}$. Then we'll get by a similar argument which we use to prove the injectivity,
\begin{align*}
\frac{1}{r}\int_{u>0} (L_s u) u^p \omega_{\varphi_1}^n &\geq \int_{u> 0} p u^{p-1}|\nabla u|_{\varphi_1}^2 \omega_{\varphi_1} + \frac{1-r\underline R}{r}  \int_{u>0} u^{p+1}\omega_{\varphi_1}^n - Cr \big(\int_M |u| \omega_{\varphi_1}^n \big) \big(\int_{u>0} u^p \omega_{\varphi_1}^n \big) \\
& - C_p \|u\|_{L^{p+1}(M)} (\int_{u>0} u^{p+1} \omega_{\varphi_1})^{\frac{p}{p+1}}\\
& \geq \frac{1- r \underline R}{r} \int_{u>0} u^{p+1} \omega_{\varphi_1}^n- C_p \int_{M} u^{p+1} \omega_{\varphi_1}^n.
\end{align*}
Multiply $|u|^{p}$ on both hand sides of $(\ref{eqn10})$ and integrate against $\omega_{\varphi_1}^n$ on the region $\{u<  0\}$. Similarly we get
\begin{align}
- \frac{1}{r}\int_{u< 0} (L_s u) |u|^p \omega_{\varphi_1}^n 
& \geq \frac{1- r \underline R}{r} \int_{u<0} |u|^{p+1} \omega_{\varphi_1}^n- C_p \int_{M} |u|^{p+1} \omega_{\varphi_1}^n.
\end{align}
Thus, we get for $p < p_0 < \infty$, we could choose our $r>0$ small such that 
\begin{align}
\frac{1}{r} \int_M |u|^{p+1} \omega^n \leq \frac{1}{r} \|u\|_{L^{p+1}(M)}^{\frac{p}{p+1}} \|L_s u\|_{L^{p+1}(M)}.
\end{align}
And then for $ p< p_0 +1$
\begin{align}
\|u\|_{L^{p}(M)} \leq C  \|L_s u\|_{L^{p}(M)} \leq C \|L_s u\|_{L^{\infty}(M)}.
\end{align}
By $L^p$ theory of elliptic equation for $(\ref{eqn10})$, we get for $ p < p_0+1$
\begin{align}
\|u\|_{W^{2,p}(M)} &\leq C \big( \frac{1}{r}\|L_su\|_{L^{p}(M)} + \frac{1}{r}\|u\|_{L^{p}(M)} \big)\\
&\leq \frac{C}{r}\|L_s u\|_{L^{\infty}(M)}.
\end{align}
By sobolev embedding, we can get that for p >n
\begin{align}
\|u\|_{L^{\infty}(M)} \leq C \|u\|_{W^{2,p}(M)} \leq \frac{C}{r}\|L_s u\|_{L^\infty(M)}.
\end{align}

Therefore, we conclude that 
\begin{align}
\|u\|_{C^{2,\alpha}(M)} \leq C r^{- \frac{2-\alpha}{1- \alpha}} \|L_s u\|_{C^{\alpha}(M)}
\end{align}
Since the norm is independent of $s \in [0,1]$ and obviously $L_0: C^{2,\alpha}(M) \rightarrow C^{\alpha}(M)$ is onto, thus by continuity method in \cite{GT}, we conclude that $L_1: C^{2,\alpha}(M) \rightarrow C^{\alpha}(M)$ is also onto. Thus we have shown that $\cD F_r|_{\varphi_1} =  L_1$ is surjective. And 
\begin{align}
\|\big(\cD F_r |_{\varphi_1} \big)^{-1} (f)\|_{C^{2, \alpha}(M)} \leq C r^{- \frac{2-\alpha }{1 - \alpha}} \|f\|_{C^{\alpha}(M)}.
\end{align}
This ends the proof of Lemma $\ref{lem3}$.
\end{proof}

Define functional $\Psi$ in a $C^{2,\alpha}$-neighborhood of $\varphi_1$ as 
\begin{align*}
\Psi: \sH^{2,\alpha}_{\omega}& \rightarrow C^{2, \alpha}_{\omega}(M)\\
 \varphi & \mapsto \varphi + \big(\cD F_r|_{\varphi_1}\big)^{-1} \big(-F_r(\varphi)\big)
\end{align*}
Our goal is to find $\varphi \in \sH_{\omega}^{2, \alpha}$ such that $F_r(\varphi) = 0$. Given the definition of $\Psi$, our problem comes down to find the fixed point of $\Psi$. So we need to show that $\Psi$ is a contraction in a small neighborhood of $\varphi_1 \in \sH_{\omega}^{2,\alpha}$.
\begin{lem}\label{lem:contraction}
There exists some $\delta > 0$,  such that if $\varphi, \tilde\varphi \in \sH_{\omega}^{2,\alpha}$ with $\|\varphi - \varphi_1\|_{C^{2,\alpha}(M)} < r^{\frac{1}{1 - \alpha }}\delta$ and $\|\tilde \varphi - \varphi_1\|_{C^{2,\alpha}(M)}< r^{\frac{1}{1 - \alpha }} \delta$, then
\begin{align}
\|\Psi (\varphi) - \Psi (\tilde \varphi) \|_{C^{2, \alpha}(M)} \leq \frac{1}{2} \|\varphi - \tilde \varphi\|_{C^{2,\alpha}(M)}.
\end{align}
\end{lem}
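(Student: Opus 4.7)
The plan is to run the usual contraction-mapping argument that underlies the inverse function theorem. Writing $A := \cD F_r|_{\varphi_1}$, one has
\begin{align*}
\Psi(\varphi) - \Psi(\tilde\varphi) = A^{-1}\bigl[A(\varphi - \tilde\varphi) - \bigl(F_r(\varphi) - F_r(\tilde\varphi)\bigr)\bigr],
\end{align*}
so the problem reduces to controlling the linearization error $E := A(\varphi - \tilde\varphi) - F_r(\varphi) + F_r(\tilde\varphi)$ in $C^{\alpha}$ and then using the inverse-norm bound $\|A^{-1}\| \leq C r^{-(2-\alpha)/(1-\alpha)}$ from Lemma \ref{lem3}. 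To handle $E$, I would parametrize $\varphi_s := \tilde\varphi + s(\varphi - \tilde\varphi)$ and apply the fundamental theorem of calculus twice: first $F_r(\varphi) - F_r(\tilde\varphi) = \int_0^1 \cD F_r|_{\varphi_s}(\varphi - \tilde\varphi)\, ds$, and then $\cD F_r|_{\varphi_s} - \cD F_r|_{\varphi_1}$ is expanded along the segment $[\varphi_1, \varphi_s]$. Since $F_r = r\theta + \id$, the identity piece is linear and cancels, leaving $E$ as $-r$ times a double integral of $\cD^2\theta|_{\psi}\bigl(\varphi_s - \varphi_1,\, \varphi - \tilde\varphi\bigr)$, with $\psi$ ranging over a small $C^{2,\alpha}$-neighborhood of $\varphi_1$.

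The main analytic input is then the bilinear estimate
\begin{align*}
\|\cD^2\theta|_\psi(v, u)\|_{C^\alpha(M)} \leq C\,\|v\|_{C^{2,\alpha}(M)}\,\|u\|_{C^{2,\alpha}(M)}, \qquad \|\psi\|_{C^{2,\alpha}(M)}\leq\tfrac12,
\end{align*}
which is structurally the same as the $\partial_r^2 \theta_{\varphi_1}|_{r=0}$ bound established within the proof of Lemma \ref{lem1}: expanding $\cD^2\theta|_\psi(v,u)$ yields a polynomial expression in $\partial\bar\partial u$, $\partial\bar\partial v$, and the inverse metric of $\omega_\psi$ (trivially $C^\alpha$-controlled), plus contributions involving $\cD P|_\psi$ and $(\partial_\varphi \cD P)|_\psi$, each governed by the Schauder estimate of Lemma \ref{lem2} exactly as in Section \ref{sec3}. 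Because $\varphi$ and $\tilde\varphi$ both lie in the $r^{1/(1-\alpha)}\delta$-ball around $\varphi_1$, so does the whole segment, which gives $\|\varphi_s - \varphi_1\|_{C^{2,\alpha}} \leq r^{1/(1-\alpha)}\delta$ along it (and for $r$ small the hypothesis $\|\psi\|_{C^{2,\alpha}} \leq \tfrac12$ is automatic).

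Assembling these estimates,
\begin{align*}
\|E\|_{C^\alpha(M)} \leq C \cdot r \cdot r^{\tfrac{1}{1-\alpha}}\delta \cdot \|\varphi - \tilde\varphi\|_{C^{2,\alpha}(M)} = C r^{\tfrac{2-\alpha}{1-\alpha}} \delta\,\|\varphi - \tilde\varphi\|_{C^{2,\alpha}(M)},
\end{align*}
using the arithmetic identity $1 + \tfrac{1}{1-\alpha} = \tfrac{2-\alpha}{1-\alpha}$. Multiplying by $\|A^{-1}\|\leq C r^{-(2-\alpha)/(1-\alpha)}$, the powers of $r$ cancel exactly, leaving $\|\Psi(\varphi) - \Psi(\tilde\varphi)\|_{C^{2,\alpha}} \leq C\delta\,\|\varphi - \tilde\varphi\|_{C^{2,\alpha}}$, so choosing $\delta$ with $C\delta \leq \tfrac12$ finishes the proof. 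The main obstacle is the bookkeeping needed for the bilinear $C^\alpha$ bound on $\cD^2\theta$, since differentiating through $\cD P$ once more produces several terms with three second-order derivatives piled on top of one another; but each can still be controlled by iterating Lemmas \ref{lem2} and \ref{lem:Lp}, and no fundamentally new analytic idea is required. What is conceptually important is the exact cancellation of the $r$-powers above, which is precisely why the neighborhood radius $r^{1/(1-\alpha)}$ is tuned just right; any other power of $r$ would cause one side to dominate and break the contraction.
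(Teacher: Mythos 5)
Your proposal is correct and follows essentially the same route as the paper: write $\Psi(\varphi)-\Psi(\tilde\varphi)$ via the fundamental theorem of calculus along the segment, bound the difference of linearizations on the $r^{\frac{1}{1-\alpha}}\delta$-ball by $C\,r\cdot r^{\frac{1}{1-\alpha}}\delta\,\|\varphi-\tilde\varphi\|_{C^{2,\alpha}(M)}$, and cancel this exactly against the $C r^{-\frac{2-\alpha}{1-\alpha}}$ inverse bound of Lemma \ref{lem3} before choosing $\delta$ small. The only immaterial difference is that the paper controls $\cD F_r|_{\varphi_s}-\cD F_r|_{\varphi_1}$ directly by Schauder estimates on the equations satisfied by $P_{\varphi}$, $\cD P|_{\varphi}(u)$ and their differences, rather than passing to a second derivative $\cD^2\theta$ with a bilinear bound as you do; both reductions rest on the same estimates from Section \ref{sec3}.
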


\begin{proof}
Denote $\varphi_s = s\varphi + (1-s) \tilde\varphi$. Suppose $\|\varphi - \varphi_1\|_{C^{2,\alpha}(M)} < r^{\frac{1}{1 - \alpha }}\delta$ and $\|\tilde \varphi - \varphi_1\|_{C^{2,\alpha}(M)}< r^{\frac{1}{1 - \alpha }} \delta$. We'll specify $\delta> 0$ later.

We have
\begin{align*}
\Psi(\varphi) - \Psi(\tilde\varphi) & = \int_0^1 \frac{\partial }{\partial s} \Psi(\varphi_s) ds\\
& = (\varphi - \tilde \varphi) - \int_0^1 \big(\cD F_r|_{\varphi_1}\big)^{-1} \big(\cD F_r|_{\varphi_s} (\varphi - \tilde \varphi)\big) ds \\
& =  -\int_0^1 \big(\cD F_r|_{\varphi_1}\big)^{-1} \{\big(\cD F_r|_{\varphi_s} - \cD F_r|_{\varphi_1} \big)(\varphi - \tilde \varphi) \} ds. 
\end{align*}
We consider the term 
\begin{align*}
\big(\cD F_r|_{\varphi_s} - \cD F_r|_{\varphi_1} \big)(\varphi - \tilde \varphi) & =  r \big(  - (\Delta_{\varphi_s} - \Delta_{\varphi_1})(\varphi -\tilde \varphi) + \int_M \big( (\Delta_{\varphi_s} - \Delta_{\varphi_1})(\varphi -\tilde \varphi)\big) \omega^n \\
&+ (\cD P|_{\varphi_s} - \cD P|_{\varphi_1}) (\varphi -\tilde \varphi)\big).
\end{align*}
Thus, we know that
\begin{align}
& \|\big(\cD F_r|_{\varphi_s} - \cD F_r|_{\varphi_1} \big)(\varphi - \tilde \varphi)\|_{C^{\alpha}(M)}\\
& \leq Cr \big( r^{\frac{1}{1-\alpha}}\delta \|\varphi - \tilde \varphi\|_{C^{2,\alpha}(M)} + \| (\cD P|_{\varphi_s} - \cD P|_{\varphi_1}) (\varphi -\tilde \varphi)\|_{C^{\alpha}(M)}\big).
\end{align}
By definitons of $\cD P|_{\varphi}$ in $(\ref{eqn7})$, we have
\begin{align}
\Delta_{\varphi_1} \big(\cD P|_{\varphi_1} (u)\big) &= \langle \partial \bar \partial u, \big(\partial \bar \partial P_{\varphi_1} - (\Ric(\omega) - \underline R \omega)\big)\rangle_{\varphi_1}, \int_M \big(\cD P|_{\varphi_1} (u) \big) \omega^n = 0\\
\Delta_{\varphi_s} \big(\cD P|_{\varphi_s} (u)\big) & = \langle \partial \bar \partial u, \big(\partial \bar \partial P_{\varphi_s} - (\Ric(\omega) - \underline R \omega)\big)\rangle_{\varphi_s}, \int_M \big(\cD P|_{\varphi_s} (u) \big) \omega^n = 0.
\end{align}
So
\begin{align*}
\Delta_{\varphi_1} \big(\cD P|_{\varphi_1}(u) - \cD P|_{\varphi_s}(u)\big) & =  \langle \partial \bar \partial u, \big(\partial \bar \partial P_{\varphi_1} - (\Ric(\omega) - \underline R \omega)\big)\rangle_{\varphi_1} - \langle \partial \bar \partial u, \big(\partial \bar \partial P_{\varphi_s} - (\Ric(\omega) - \underline R \omega)\big)\rangle_{\varphi_s} \\
&+ \big(\Delta_{\varphi_s} - \Delta_{\varphi_1}\big)\big( \cD P|_{\varphi_s} (u) \big)\\
& = u_{, i \bar j} (g_{\varphi_1}^{i \bar l} g_{\varphi_1}^{k \bar j}  -g_{\varphi_s}^{i \bar l} g_{\varphi_s}^{k \bar j}) P_{\varphi_1, k \bar l}  + u_{,i\bar j} g_{\varphi_s}^{i\bar l} g_{\varphi_s}^{k\bar j}(P_{\varphi_1} - P_{\varphi_s}) \\
&- u_{, i \bar j} (g_{\varphi_1}^{i \bar l} g_{\varphi_1}^{k \bar j}  -g_{\varphi_s}^{i \bar l} g_{\varphi_s}^{k \bar j} ) (\Ric(\omega) - \underline R \omega)_{k\bar l} 
 + (g_{\varphi_s}^{k\bar l} - g_{\varphi_1}^{k\bar l}) \big(\cD P|_{\varphi_s} (u)\big)_{, k \bar l}
\end{align*}
Thus, by schauder estimate and previous estimate about $P_{\varphi}$ and $\cD P|_{\varphi}(u)$ in Section $\ref{sec3}$,
\begin{align*}
\|\cD P|_{\varphi_1}(u) - \cD P|_{\varphi_s}(u)\|_{C^{2,\alpha}(M)} &\leq C r^{\frac{1}{1-\alpha}} \delta \big( \|u\|_{C^{2,\alpha}(M)} +\|\cD P|_{\varphi_s} (u)\|_{C^{2,\alpha}(M)}\big) \\
& + C \|P_{\varphi_1} - P_{\varphi_s}\|_{C^{2,\alpha}(M)}\|u\|_{C^{2,\alpha}(M)}\\
&\leq Cr^{\frac{1}{1-\alpha}}\delta \|u\|_{C^{2,\alpha}(M)}+ C \|P_{\varphi_1} - P_{\varphi_s}\|_{C^{2,\alpha}(M)}\|u\|_{C^{2,\alpha}(M)}.
\end{align*}
Since we have
\begin{align*}
\Delta_{\varphi_1} (P_{\varphi_1} - P_{\varphi_s}) = (g_{\varphi_s}^{k\bar l} - g_{\varphi_1}^{k\bar l})  P_{\varphi_s, k \bar l} + (g_{\varphi_1}^{k\bar l} - g_{\varphi_s}^{k\bar l})(\Ric (\omega) -\underline R\omega)_{k \bar l},
\end{align*}
then
\begin{align}
\|P_{\varphi_1} - P_{\varphi_s}\|_{C^{2,\alpha}(M)} \leq C r^{\frac{1}{1-\alpha}} \delta.
\end{align}
Thus, we have
\begin{align}
\|\big(\cD P|_{\varphi_1} - \cD P|_{\varphi_s}\big) (\varphi -\tilde \varphi)\|_{C^{\alpha}(M)} \leq \|\big(\cD P|_{\varphi_1} - \cD P|_{\varphi_s}\big) (\varphi -\tilde \varphi)\|_{C^{2,\alpha}(M)} \leq C r^{\frac{1}{1- \alpha}} \delta \|\varphi -\tilde \varphi\|_{C^{2,\alpha}(M)}.
\end{align}
By Lemma $\ref{lem3}$, we have that
\begin{align}
\|\big( \cD F_r|_{\varphi_1}\big)^{-1} \{\big(\cD F_r|_{\varphi_s} - \cD F_r|_{\varphi_1} \big)(\varphi - \tilde \varphi) \} \|_{C^{2,\alpha}(M)} &\leq C r^{-\frac{2-\alpha}{1-\alpha}} r  r^{\frac{1}{1-\alpha}} \delta \|\varphi -\tilde \varphi\|_{C^{2,\alpha}(M)}\\
& \leq C \delta \|\varphi - \tilde \varphi\|_{C^{2,\alpha}(M)}.
\end{align}
And then
\begin{align}
\|\Psi(\varphi) - \Psi(\tilde \varphi)\|_{C^{2,\alpha}(M)} \leq C\delta \|\varphi -\tilde \varphi\|_{C^{2,\alpha}(M)}.
\end{align}
We could choose $\delta> 0$ sufficiently small such that $C\delta < \frac{1}{2}$, and thus it ends the proof of Lemma $\ref{lem:contraction}$. 
\end{proof}
Now we're ready to prove the Theorem $\ref{thm1}$.

\begin{proof} Denote the constant $\delta> 0 $ in Lemma $\ref{lem:contraction}$ as $\delta_{0}$. Define for $k \in \bZ$
\begin{align}
\varphi_k = \Psi^{k-1} (\varphi_1).
\end{align}
Ultimately, we want to show that $\varphi_k \rightarrow \varphi_{\infty}$ in $C^{2,\alpha}(M)$ norm for some $\varphi_{\infty} \in \sH_{\omega}^{2,\alpha}$ as $k \rightarrow \infty$. We choose the start point to be $\varphi_1$, thus we need to show that $\varphi_2$ stays in the neighborhood of $\varphi_1$ for $\Psi$ to be contraction. Compute
\begin{align*}
\|\varphi_2 - \varphi_1\|_{C^{2,\alpha}(M)} & = \|\big(\cD F_r|_{\varphi_1}\big)^{-1} \big(-F_r(\varphi_1)\big)\|_{C^{2,\alpha}(M)}\\
& \leq C r^{- \frac{2-\alpha}{1-\alpha}} \|F_r(\varphi_1)\|_{C^{\alpha}(M)} \\
& \leq(C r^{\frac{1-3 \alpha}{1-\alpha}}) r^{\frac{1}{1-\alpha}} .
\end{align*}
where we use Lemma $\ref{lem1}$ and Lemma $\ref{lem3}$. It's obvious we could choose $\alpha =\frac{1}{4}$ and $r>0$ sufficiently small such that 
\begin{align}
\|\varphi_2 - \varphi_1\|_{C^{2,\alpha}(M)} \leq \frac{1}{2} r^{\frac{1}{1-\alpha}} \delta_0.
\end{align}
By induction, we could get that for any $k \in \bZ$
\begin{align}
\|\varphi_k -\varphi_1\|_{C^{2,\alpha}(M)} < r^{\frac{1}{1-\alpha}} \delta_0, 
\end{align}
and 
\begin{align}
\|\varphi_{k+1} - \varphi_k\|_{C^{2,\alpha}(M)} \leq \frac{1}{2} \|\varphi_k - \varphi_{k-1}\|_{C^{2,\alpha}(M)} \leq (\frac{1}{2})^k r^{\frac{1}{1-\alpha}} \delta_0.
\end{align}
Thus, we conclude that there exists some $\varphi_{\infty} \in \sH_{\omega}^{2,\alpha}$ such that $\varphi_k \rightarrow \varphi_{\infty}$ in $C^{2,\alpha}(M)$ as $k \rightarrow \infty$. Thus, we get
\begin{align}
F_r(\varphi_\infty) = 0.
\end{align}
From the regularity of elliptic equation, we could immediately see that $\varphi_\infty \in C^{\infty}(M)$. Also it's clear that 
\begin{align}\|\varphi_\infty\|_{C^{2,\alpha}(M)} \leq \|\varphi_1\|_{C^{2,\alpha}(M)} + r^{\frac{1}{1-\alpha}} \delta_0 \leq Cr \rightarrow 0, \text{ as } r \rightarrow 0.
\end{align}
Then we finish the proof of Theorem $\ref{thm1}$.

\end{proof}


\begin{thebibliography}{100}

\bibitem{A} T. Aubin, \emph{\'Equations du type Monge-Amp\`ere sur les vari\'et\'es k\"ahl\'eriennes compactes}. Bull. Sci. Math. (2) 102 (1978), no. 1, 63–95.

\bibitem{Ca1}E. Calabi. \emph{The space of K\"ahler metrics.} Proc. Int. Congr.
Math. Amsterdam 2, 206-207.

\bibitem{Ca2} E. Calabi. \emph{On K\"ahler manifolds with vanishing canonical class.} Algebraic geometry and topology. A symposium in honor of S. Lefschetz. Vol. 12. 1957.

\bibitem{Ca3} E. Calabi. \emph{Improper affine hyperspheres of convex type and a generalization of a theorem by K. J\"orgens}. Michigan Math. J. Volume 5, Issue 2 (1958), 105-126.

\bibitem{Ca4} E. Calabi.
\emph{Extremal K\"ahler metrics}. Seminar on Differential Geometry, volume 16 of 102, pages 259-290, Ann. of Math Studies, University Press, 1982. 

\bibitem{Ca5} E. Calabi. \emph{Extremal K\"ahler Metrics II}. Differential Geometry and Complex Analysis, pages 96-114, Springer, 1985.

\bibitem{C1}X. X. Chen. \emph{On the existence of constant scalar curvature K\" ahler metric: a new perspective.} arXiv:1506.06423.

\bibitem{CDS1} X. X. Chen, S. Donaldson and S. Sun. \emph{K\"ahler-Einstein metrics on Fano manifolds. I: Approximation of metrics with cone singularities}. J. Amer. Math. Soc. 28 (2015), pp. 183-197 (I).

\bibitem{CDS2} X. X. Chen, S. Donaldson and S. Sun. \emph{ K\"ahler-Einstein metrics on Fano manifolds. II: Limits with cone angle less than $2\pi$}. J. Amer. Math. Soc. 28 (2015), pp. 199-234.

\bibitem{CDS3} X. X. Chen, S. Donaldson and S. Sun. \emph{K\"ahler-Einstein metrics on Fano manifolds. III: Limits as cone angle approaches $2\pi$ and completion of the main proof}. J. Amer. Math. Soc. 28 (2015), pp. 235-278.

\bibitem{CPZ} X. X. Chen, Mihai P\u aun and Y. Zeng. \emph{On the deformation of extremal metrics.} arXiv:1506.01290v2 

\bibitem{D1} S. K. Donaldson. \emph{Conjectures in K\"ahler geometry.} Strings and geometry (2002): 71.

\bibitem{D2} S. K. Donaldson. \emph{Remarks on gauge theory, complex geometry and 4-manifold topology.} Fields Medallists’ Lectures, World Sci. Ser. 20th Century Math 5 (1997): 384-403.

\bibitem{D3} S. K. Donaldson, \emph{Scalar curvature and stability of toric varieties.} Jour.
Differential Geometry 62 (2002), 289–349.

\bibitem{D4} S. K. Donaldson, \emph{Interior estimates for solutions of Abreu’s equation.}
Collectanea Math. 56 (2005), 103–142.

\bibitem{D5}  S. K. Donaldson, \emph{Extremal metrics on toric surfaces: a continuity
method.} Jour. Differential Geometry 79 (2008), 384-432.

\bibitem{D6} S.K. Donaldson. \emph{Constant scalar curvature metrics on toric surfaces.} Geometric and Functional Analysis 19.1 (2009): 83-136.

\bibitem{JF} J. Fine. \emph{Constant scalar curvature K\"ahler metrics on fibred complex surfaces}. J. Differential Geom. Volume 68, Number 3(2004), 397-432.

\bibitem{GT} D. Gilbarg and N. S. Trudinger. \emph{Elliptic partial differential equations of second order.} springer, 2015.

\bibitem{MS} M. Lejmi and G. Sz\'ekelyhidi. \emph{The J-flow and stability}. \url{http://arxiv.org/abs/1309.2821v1}.

\bibitem{JS} J. Stoppa. \emph{Twisted constant scalar curvature K\"ahler metrics and K\"ahler slope stability}. 
J. Differential Geom. Volume 83, Number 3(2009), 663-691.

\bibitem{T1} G. Tian. \emph{K\"ahler-Einstein metrics with positive scalar curvature.} Inventiones Mathematicae 130.1 (1997): 1-37.

\bibitem{T2} G. Tian. \emph{Canonical metrics in K\"ahler geometry.} Birkhäuser, 2012.

\bibitem{RT} R. Thomas. \emph{Notes on GIT and symplectic reduction for bundles and varieties.} arXiv preprint math/0512411.

\bibitem{Y} S. T. Yau, \emph{On the Ricci curvature of a compact K\"ahler manifold and the complex Monge-Amp\`ere equation. I}. Comm. Pure Appl. Math. 31 (1978), no. 3, 339–411. 

\end{thebibliography}
\end{document}